\documentclass[11pt]{amsart}
\usepackage{amssymb}
\usepackage{amsmath}
\usepackage{amsfonts,latexsym,amstext}

\usepackage [T1]{fontenc}
\usepackage [latin1]{inputenc}
\usepackage{color}


\newtheorem{theorem}{Theorem}[section]
\newtheorem{definition}[theorem]{Definition}

\newtheorem{example}[theorem]{Example}
\newtheorem{lemma}[theorem]{Lemma}
\newtheorem{proposition}[theorem]{Proposition}
\newtheorem{corollary}[theorem]{Corollary}
\newtheorem{remark}[theorem]{Remark}

\pagestyle{headings}

\setlength{\oddsidemargin}{-0.5cm} \setlength{\topmargin}{-1cm}
\setlength{\evensidemargin}{-1.3cm} \setlength{\textwidth}{17.5cm}
\setlength{\textheight}{24cm}

\def\u{\mathfrak{A}}
\def\s{\{\u_k\}_k}
\def\hbu{H_{b\u}}

\def\p{\mathcal{P}}
\def\v{\overset\vee}
\def\l{\mathcal{L}}

\def\bs{\backslash}


\begin{document}


\title{Holomorphic Functions and polynomial ideals on Banach spaces}

\author{Daniel Carando}

\author{Ver\'{o}nica Dimant}
\author{Santiago Muro}


\thanks{Partially supported by ANPCyT PICT 05 17-33042. The first and third authors were also partially supported by UBACyT Grant X038 and ANPCyT PICT 06
00587.}

\address{Departamento de Matem\'{a}tica - Pab I,
Facultad de Cs. Exactas y Naturales, Universidad de Buenos Aires,
(1428) Buenos Aires, Argentina and CONICET} \email{dcarando@dm.uba.ar}

\address{Departamento de Matem\'{a}tica, Universidad de San
Andr\'{e}s, Vito Dumas 284, (B1644BID) Victoria, Buenos Aires,
Argentina and CONICET} \email{vero@udesa.edu.ar}

\address{Departamento de Matem\'{a}tica - Pab I,
Facultad de Cs. Exactas y Naturales, Universidad de Buenos Aires,
(1428) Buenos Aires, Argentina and CONICET} \email{smuro@dm.uba.ar}

\subjclass[2000]{47H60, 46G20, 30H05, 46M05} \keywords{Polynomial
ideals, holomorphic functions, Riemann domains over Banach spaces}

\begin{abstract}
Given $\u$ a multiplicative sequence of polynomial ideals, we
consider the associated algebra of holomorphic functions of bounded
type, $H_{b\u}(E)$. We prove that, under very natural conditions
satisfied by many usual classes of polynomials, the spectrum
$M_{b\u}(E)$ of this algebra ``behaves'' like the classical case of
$M_{b}(E)$ (the spectrum of $H_b(E)$, the algebra of bounded type holomorphic functions). More precisely, we prove that
$M_{b\u}(E)$ can be endowed with a structure of  Riemann domain over
$E''$ and that the extension of each $f\in H_{b\u}(E)$ to the
spectrum is an $\u$-holomorphic function of bounded type in each
connected component. We also prove a Banach-Stone type theorem for
these algebras.
\end{abstract}

\maketitle

\section*{Introduction}
We consider algebras of analytic functions associated to sequences
of polynomial ideals. More precisely, if $\u=\{\u_k\}_k$ is a
coherent sequence of Banach polynomial ideals (see the definitions
below) and $E$ is a Banach space, we consider the space $\hbu(E)$ of
holomorphic functions of bounded type associated to $\u(E)$, much in
the spirit of holomorphy types introduced by Nachbin \cite{Nac69}
(see also \cite{Din71(holomorphy-types)}). These spaces of
$\u$-holomorphic functions of bounded type were introduced
in~\cite{CarDimMur07}, and many particular classes of holomorphic
functions appearing in the literature are obtained by this procedure
from usual sequences of polynomial ideals (such as nuclear,
integral, approximable, weakly continuous on bounded sets,
extendible, etc.). Under certain multiplicativity conditions on the
sequence of polynomial ideals (satisfied for all the mentioned
examples), $\hbu(E)$ turns out to be an algebra.

We study this multiplicativity condition and relate it with
properties of the associated tensor norms. We show that polynomial
ideals associated to natural symmetric tensor norms (in the sense
of~\cite{CarGal-4}) are multiplicative. We also prove that composition
and maximal/minimal hulls of multiplicative sequences of polynomial
ideals are multiplicative.

Whenever $\hbu(E)$ is an algebra, we study its spectrum $M_{b\u}(E)$ and show, under fairly general assumptions, that it has an analytic structure as a Riemann domain over $E''$, a result analogous to that for the spectrum $M_b(E)$ of $H_b(E)$, the algebra of all holomorphic functions of bounded type (see \cite{AroGalGarMae96} and \cite[Section 6.3]{Din99}). Moreover, the connected components of $M_{b\u}(E)$ are analytic copies of $E''$, and one may wonder if the Gelfand extension of a function $f$ to $M_{b\u}(E)$ is analytic and, also, if the restriction of this extension to each connected component can be thought as a function in $\hbu(E'')$. To answer these questions, we study the Aron-Berner extension of functions in $\hbu(E)$ and also translation and convolution operators on these kinds of algebras. We obtain conditions on the sequence of polynomial ideals that ensure a positive answer to both questions, which are satisfied by most  of the examples considered throughout the article.

Finally, we address a Banach-Stone type question on these algebras:
if $\hbu(E)$ and $H_{b\mathfrak B}(F)$ are (topologically and
algebraically) isomorphic, what can we say about $E$ and $F$? We
obtain results in this direction which allow us to show, for
example, that if $E$ or $F$ is reflexive and $\u$ and $\mathfrak B$
are any of the sequence of nuclear, integral, approximable or
extendible polynomials, then if $H_{b\u}(E)$ is isomorphic to
 $H_{b\mathfrak B}(F)$ it follows that $E$ and $F$  are isomorphic.

\medskip We refer to \cite{Din99,Muj86} for notation and results regarding
polynomials and holomorphic functions in general, to
\cite{Flo01,Flo02} for polynomial ideals and to \cite{DefFlo93,Flo97} for tensor products of Banach spaces.

\section{Preliminaries}

Throughout this paper $E$ will denote a complex Banach space and
 $\p^k(E)$ is the Banach space of all continuous
$k$-homogeneous polynomials from $E$ to $\mathbb{C}$. If $P\in
\p^k(E)$, there exists a unique symmetric $n$-linear mapping $\v
P\colon\underbrace{E\times\cdots\times E}_k\to \mathbb{C}$ such that
$$P(x)=\v P(x,\dots,x).$$

We define, for each $a\in E$,  $P_{a^j}\in \p^{k-j}(E)$ by $$
P_{a^j}(x)=\v P(a^j,x^{k-j})=\v
P(\underbrace{a,...,a}_j,\underbrace{x,...,x}_{k-j}).$$ For $j=1$,
we write $P_a$ instead of $P_{a^1}$.

\bigskip

Let us recall the definition of polynomial ideals
\cite{Flo01,Flo02}. A \textbf{Banach ideal of
(scalar-valued) continuous $k$-homo\-geneous polynomials} is a pair
$(\mathfrak{A}_k,\|\cdot\|_{\mathfrak A_k})$ such that:
\begin{enumerate}
\item[(i)] For every Banach space $E$, $\mathfrak{A}_k(E)=\mathfrak A_k\cap \mathcal
P^k(E)$ is a linear subspace of $\p^k(E)$ and $\|\cdot\|_{\u_k(E)}$
is a norm on it. Moreover, $(\u_k(E), \|\cdot\|_{\u_k(E)})$ is a
Banach space.

\item[(ii)] If $T\in \l (E_1,E)$ and $P \in \u_k(E)$, then $P\circ T\in \u_k(E_1)$ with $$ \|
P\circ T\|_{\u_k(E_1)}\le \|P\|_{\u_k(E)} \| T\|^k.$$

\item[(iii)] $z\mapsto z^k$ belongs to $\u_k(\mathbb C)$
and has norm 1.
\end{enumerate}

In \cite{CarDimMur09} we defined and studied coherent sequences of
polynomial ideals. Here we present more results about this topic.
Even though the original definitions were for general vector valued
polynomial ideals, we focus in this article on the case of scalar
valued polynomial ideals.

We recall the definitions:

\begin{definition}\label{deficoherente}
Consider the sequence $\u=\{\u_k\}_{k=1}^\infty$, where for each
$k$, $\u_k$ is a Banach ideal of scalar valued $k$-homogeneous
polynomials. We say that $\{\u_k\}_k$ is a \textbf{coherent sequence
of polynomial ideals} if there exist positive constants $C$ and $D$
such that for every Banach
space $E$, the following conditions hold for every $k\in\mathbb{N}$: \\
\begin{itemize}
\item[(i)]
For each $P\in \u_{k+1}(E)$ and $a\in E$, $ P_a$ belongs to
$\u_k(E)$ and
$$\|P_a\|_{\u_{k}(E)} \le C
\|P\|_{\u_{k+1}(E)} \|a\|$$
\item[(ii)]
For each $P\in \u_k(E)$ and $\gamma\in E'$, $\gamma P$ belongs to
$\u_{k+1}(E)$ and
$$\|\gamma P\|_{\u_{k+1}(E)}\le D \|\gamma\|
\|P\|_{\u_k(E)}$$
\end{itemize}
\end{definition}

In \cite{CarDimMur09}, many examples of coherent sequences were
presented (see Examples below). Let us see now that from any pair of
such examples, it is ``easy'' to construct many others.

If $\u_k^0$ and $\u_k^1$ are Banach ideals of $k$-homogeneous
polynomials, for each $0<\theta<1$,  we denote by $\u_k^{\theta}$
the polynomial ideal defined by
$$
\u_k^{\theta}(E)=\big[\u_k^{0}(E),\u_k^{1}(E)\big]_{\theta}\quad
\textrm{for every Banach space }E.
$$
That is,  $\big[\u_k^{0}(E),\u_k^{1}(E)\big]_{\theta}$ is the space
obtained by complex interpolation from the pair
$\big(\u_k^{0}(E),\u_k^{1}(E)\big)$ with parameter $\theta$. The
complex interpolation can be defined because both spaces are
included in the space of continuous $k$-homogeneous polynomials
$\p^k(E)$. Note also that $\u_k^{\theta}$ is actually a Banach ideal
by the properties of interpolations spaces, since the ideal
properties can be rephrased as the continuity of certain linear
operators.

The construction of interpolating spaces easily implies the
following result.

\begin{proposition}
 Let $\{\u_k^0\}_k$ and $\{\u_k^1\}_k$ be coherent sequences
of polynomial ideals with constants $C_0$, $D_0$ and $C_1$, $D_1$,
respectively. Then, for every $0<\theta<1$, the sequence
$\{\u_k^{\theta}\}_k$ is coherent with constants
$C_0^{1-\theta}C_1^{\theta}$ and $D_0^{1-\theta}D_1^{\theta}$.

\end{proposition}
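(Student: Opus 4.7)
The plan is to recast each of the two coherence conditions as the boundedness of a single linear operator between polynomial ideal spaces, and then to invoke the standard bilinear/linear complex interpolation theorem. Concretely, I fix a Banach space $E$ and $k\in\mathbb{N}$, and for each fixed $a\in E$ consider the operator
\[
T_a\colon \u_{k+1}(E)\longrightarrow \u_k(E),\qquad T_a(P)=P_a,
\]
together with, for each fixed $\gamma\in E'$, the operator
\[
S_\gamma\colon \u_k(E)\longrightarrow \u_{k+1}(E),\qquad S_\gamma(P)=\gamma P.
\]
Both are linear, and the coherence hypotheses on $\{\u_k^0\}_k$ and $\{\u_k^1\}_k$ say precisely that $T_a$ acts boundedly between the $0$-spaces with norm at most $C_0\|a\|$ and between the $1$-spaces with norm at most $C_1\|a\|$, while $S_\gamma$ has norm at most $D_0\|\gamma\|$ between the $0$-spaces and at most $D_1\|\gamma\|$ between the $1$-spaces.

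Next I would apply the complex interpolation theorem of Calder\'on: a linear operator that is bounded between both endpoints of two compatible interpolation couples is bounded on the interpolated spaces, with norm bounded by the geometric mean of the endpoint norms. Applied to $T_a$ with the couples $(\u_{k+1}^0(E),\u_{k+1}^1(E))$ and $(\u_{k}^0(E),\u_{k}^1(E))$, this yields
\[
\|P_a\|_{\u_k^\theta(E)}\le (C_0\|a\|)^{1-\theta}(C_1\|a\|)^{\theta}\,\|P\|_{\u_{k+1}^\theta(E)}=C_0^{1-\theta}C_1^{\theta}\,\|a\|\,\|P\|_{\u_{k+1}^\theta(E)},
\]
which is exactly condition (i) for $\{\u_k^\theta\}_k$ with constant $C_0^{1-\theta}C_1^{\theta}$. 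Applying the same theorem to $S_\gamma$ gives condition (ii) with constant $D_0^{1-\theta}D_1^{\theta}$.

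The only point that needs brief justification is that $(\u_{k}^0(E),\u_{k}^1(E))$ is a genuine compatible couple of Banach spaces, and that the interpolated space is indeed $\u_k^\theta(E)$ as defined in the excerpt; but this is built into the construction, since both endpoint spaces sit continuously inside $\p^k(E)$. I do not expect any real obstacle here: the proof is essentially a one-line application of interpolation once the coherence conditions are read as the boundedness of the operators $T_a$ and $S_\gamma$, and the advertised constants come straight from the Riesz--Thorin-type bound $\|T\|_\theta\le\|T\|_0^{1-\theta}\|T\|_1^{\theta}$.
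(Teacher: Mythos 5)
Your proof is correct and is essentially the argument the paper intends: the paper disposes of this proposition with the remark that it follows easily from the construction of interpolation spaces, and your reading of conditions (i) and (ii) as boundedness of the linear maps $P\mapsto P_a$ and $P\mapsto \gamma P$ between compatible couples inside $\p^k(E)$, followed by the bound $\|T\|_\theta\le\|T\|_0^{1-\theta}\|T\|_1^{\theta}$, is exactly that argument made explicit. No gaps.
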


\bigskip

There is a natural way of building a class of holomorphic functions associated to a coherent sequence of polynomial ideals. In \cite{CarDimMur07} we defined:

\begin{definition}
Let $\u=\s$ be a coherent sequence of polynomial ideals and $E$ be a
Banach space. We define the space of $\u$-holomorphic functions of
bounded type by
$$
\hbu(E)=\left\{f\in H(E)\ : \frac{d^kf(0)}{k!} \in \u_k(E) \textrm{
and } \lim_{k\rightarrow \infty}
\Big\|\frac{d^kf(0)}{k!}\Big\|_{\u_k(E)}^{1/k}=0 \right\}.
$$
\end{definition}

We define in $\hbu(E)$ the seminorms $p_R$, for $R>0$, by
$$
p_R(f)=\sum_{k=0}^{\infty}
\Big\|\frac{d^kf(0)}{k!}\Big\|_{\u_k(E)}R^k,
$$
for $f\in \hbu(E)$. It is proved in \cite{CarDimMur07} that this is a Fr\'{e}chet space. Since for every polynomial $P\in \u_k(E)$ we have $\|P\|\le \|P\|_{\u_k(E)} $, the space $\hbu(E)$ is continuously contained in $H_b(E)$.

The following examples of spaces of holomorphic functions of bounded type were already defined in the literature and can be seen as particular cases of the above definition.

\begin{example} \rm
\begin{enumerate}
\item[(a)] Let $\u$ be the sequence of continuous homogeneous polynomials
$\u_k=\mathcal P^k$, $k\geq 1$. Then $\hbu(E)=H_b(E)$.
\item[(b)] If $\u$ is the sequence of weakly continuous on bounded sets polynomial ideals then $\hbu(E)$
is the space of weakly uniformly continuous holomorphic functions of
bounded type $H_{bw}(E)$ defined by Aron in \cite{Aro79}.
\item[(c)] If $\u$ is the sequence of nuclear polynomial ideals then $\hbu(E)$
is the space of nuclearly entire functions of bounded type
$H_{Nb}(E)$ defined by Gupta and Nachbin (see \cite{Din99,Gup70}).
\item[(d)] If $\u$ is the sequence of extendible polynomials,
$\u_k=\mathcal P_e^k$, $k\ge 1$. Then, by~\cite[Proposition
14]{Car01},  $ \hbu(E)$ is the space of all $f\in H(E)$ such that,
for any Banach space $G\supset E $, there is an extension $ \tilde
f\in H_b(G)$ of~$f$.
\item[(e)] Let $\u$ be the sequence of integral polynomials,
$\u_k=\mathcal P_I^k$, $k\ge 1$. Then $\hbu(E)$ is the space of
integral holomorphic functions of bounded type $H_{bI}(E)$ defined
in \cite{DimGalMaeZal04}.
\end{enumerate}
\end{example}

\section{Multiplicative sequences}

\begin{definition}
Let $\s$ be a sequence of scalar valued polynomial ideals. We will
say that $\s$ is {\bf multiplicative} if it is coherent and there exists a constant $M$ such that for
each $P\in\u_k(E)$ and $Q\in\u_l(E)$, we have that
$PQ\in\u_{k+l}(E)$ and
$$
\|PQ\|_{\u_{k+l}(E)}\le M^{k+l}\|P\|_{\u_k(E)}\|Q\|_{\u_l(E)}.
$$
\end{definition}

Our interest in multiplicative sequences of polynomial ideals is
motivated by the following result from \cite{CarDimMur07}:

\begin{lemma}
Let $\u$ be a multiplicative sequence. If $f,g\in \hbu(E)$ then
$f\cdot g\in \hbu(E)$. Therefore, $\hbu(E)$ is an algebra.
\end{lemma}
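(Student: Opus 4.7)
The plan is to express everything in terms of Taylor coefficients at the origin, use multiplicativity term by term, and then compare the seminorms $p_R(fg)$ with $p_{MR}(f)$ and $p_{MR}(g)$.

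First I would write $P_k = \frac{d^k f(0)}{k!} \in \u_k(E)$ and $Q_l = \frac{d^l g(0)}{l!} \in \u_l(E)$, so that $f = \sum_{k\geq 0} P_k$ and $g = \sum_{l\geq 0} Q_l$ are the Taylor expansions. Since both series converge absolutely in $H_b(E)$ on every ball (they even converge in $\hbu(E)$), the standard Cauchy product for holomorphic functions gives
$$\frac{d^n(fg)(0)}{n!} \;=\; R_n \;:=\; \sum_{k+l=n} P_k\, Q_l.$$
Because $R_n$ is a \emph{finite} sum of products of the form $P_k Q_{n-k}$, which by multiplicativity lie in $\u_n(E)$, and $\u_n(E)$ is a linear subspace of $\p^n(E)$, we get $R_n \in \u_n(E)$ for every $n$.

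Next I would estimate $\|R_n\|_{\u_n(E)}$ using the triangle inequality and the multiplicativity constant $M$:
$$\|R_n\|_{\u_n(E)} \;\leq\; \sum_{k+l=n} \|P_k Q_l\|_{\u_n(E)} \;\leq\; \sum_{k+l=n} M^{n}\,\|P_k\|_{\u_k(E)}\,\|Q_l\|_{\u_l(E)}.$$
Multiplying by $R^n = R^{k+l}$ and summing over $n$, the double sum factorizes as a Cauchy product:
$$p_R(fg) \;=\; \sum_{n\geq 0} \|R_n\|_{\u_n(E)} R^n \;\leq\; \Bigl(\sum_{k\geq 0} \|P_k\|_{\u_k(E)}(MR)^k\Bigr)\Bigl(\sum_{l\geq 0} \|Q_l\|_{\u_l(E)}(MR)^l\Bigr) \;=\; p_{MR}(f)\,p_{MR}(g).$$
Since $f, g \in \hbu(E)$, the seminorms $p_{MR}(f)$ and $p_{MR}(g)$ are finite for every $R>0$, hence $p_R(fg)<\infty$ for every $R>0$, which is equivalent to $\|R_n\|_{\u_n(E)}^{1/n}\to 0$. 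This gives $fg \in \hbu(E)$ together with the continuity estimate $p_R(fg) \leq p_{MR}(f)\,p_{MR}(g)$, which also shows multiplication is jointly continuous and therefore $\hbu(E)$ is a (Fr\'echet) algebra.

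No step is really an obstacle: the only thing that needs a moment of care is justifying the identification of $R_n$ with the $n$-th Taylor coefficient of $fg$, which follows from the fact that the Taylor series of $f$ and $g$ converge absolutely in $H_b(E)$ on each ball (a consequence of the continuous inclusion $\hbu(E)\hookrightarrow H_b(E)$ noted earlier), so the Cauchy product is valid. Everything else reduces to the single inequality $\|PQ\|_{\u_{k+l}(E)} \leq M^{k+l}\|P\|_{\u_k(E)}\|Q\|_{\u_l(E)}$ supplied by the multiplicativity hypothesis.
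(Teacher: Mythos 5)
Your proof is correct, and it is essentially the standard argument for this lemma (which the paper itself does not prove but quotes from [CarDimMur07]): identify the Taylor coefficients of $fg$ via the Cauchy product, apply the multiplicativity estimate termwise, and sum to get $p_R(fg)\le p_{MR}(f)\,p_{MR}(g)$. This inequality both places $fg$ in $\hbu(E)$ and gives the continuity of multiplication, so nothing further is needed.
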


In the following section we study the spectrum of this algebra.
Now, let us see some examples of multiplicative sequences.
\begin{example}\rm
\begin{enumerate}
\item[(a)] If $\u_k$ is the ideal of all $k$-homogeneous (or of
approximable, extendible, weakly continuous on bounded sets)
polynomials then $\s$ is a multiplicative
sequence.

\item[(b)]If $\u_k$ is the ideal of all $k$-homogeneous nuclear
polynomials then $\s$ is a multiplicative sequence (see for example
\cite[Exercise 2.63]{Din99} or deduced it as a consequence of the
following example and Corollary \ref{min y max}).

\item[(c)]If $\u_k$ is the ideal of all $k$-homogeneous integral
polynomials then $\s$ is a multiplicative sequence. Indeed, for
$P\in\p_I(^kE)$, $Q\in\p_I(^lE)$, let us prove that $PQ$ is a
continuous linear functional on the $k$-fold symmetric tensor
product of $E$ with the injective symmetric norm
($\varepsilon^s_k$). Take $\psi=\sum_i
x_i^{k+l}\in\TS{k+l}_{\varepsilon_{k+l}^s}E$, then
\begin{eqnarray*}
\big|\langle PQ,\psi\rangle\big| &=& \big|\sum_i P(x_i)Q(x_i)\big| = \big|P\big(\sum_i x_i^kQ(x_i)\big)\big| \le \|P\|_I\sup_{\gamma\in B_{E'}}\big|\sum_i \gamma(x_i)^kQ(x_i)\big| \\
&=& \|P\|_I\sup_{\gamma\in B_{E'}}\Big|Q\Big(\sum_i
\gamma(x_i)^kx_i^l\Big)\Big|  \le  \|P\|_I\|Q\|_I\sup_{\gamma\in
B_{E'}}\sup_{\varphi\in B_{E'}}\big|\sum_i
\gamma(x_i)^k\varphi(x_i)^l\big|.
\end{eqnarray*}
Let $R\in\p(^{k+l}E')$, $R(\gamma):=\sum \gamma(x_i)^{k+l}$ for
$\gamma\in E'$. Then by \cite[Corollary 4]{Har97}, if we take
$\gamma,\varphi\in S_{E'}$, we obtain
\begin{eqnarray*}
|\v R(\underbrace{\gamma,\dots,\gamma}_{k},\underbrace{\varphi,\dots,\varphi}_l)| &=&\big|\sum_i \gamma(x_i)^k\varphi(x_i)^l\big| \le \frac{(k+l)^{k+l}}{(k+l)!}\frac{k!}{k^k}\frac{l!}{l^l} \|R\|\\
&=&\frac{(k+l)^{k+l}}{(k+l)!}\frac{k!}{k^k}\frac{l!}{l^l}
\varepsilon_{k+l}^s\Big(\sum_i x_i^{k+l}\Big).
\end{eqnarray*}
Therefore,
$$
\big|\langle PQ,\psi\rangle\big| \leq
\frac{(k+l)^{k+l}}{(k+l)!}\frac{k!}{k^k}\frac{l!}{l^l}\|P\|_I\|Q\|_I\varepsilon_{k+l}^s(\psi),
$$
and so $PQ$ is integral with $\|PQ\|_I\le
\frac{(k+l)^{k+l}}{(k+l)!}\frac{k!}{k^k}\frac{l!}{l^l}\|P\|_I\|Q\|_I\leq
e^{k+l}\|P\|_I\|Q\|_I$.

S. Lassalle and C. Boyd proved that the product of Banach algebra
valued integral polynomials is integral (personal communication).
\end{enumerate}
\end{example}

\bigskip

One may wonder if any coherent sequence is automatically multiplicative. The answer is no. The construction in  \cite[Section 2]{CarDimMur09} can be easily adapted to obtain  a coherent sequence  $\{\u_n\}_n$ with $\u_1=\mathcal{L}$, $\u_2=\mathcal{P}^2$, $\u_3=\mathcal{P}^3$ and  $\u_4=\mathcal{P}^4_{wsc0}$ (the
$4$-homogeneous polynomials that are weakly sequentially
continuous at 0). To see that the
sequence is not multiplicative consider, for instance,
$P\in\mathcal{P}^2(\ell_2)$ given by $P(x)=\sum_nx_n^2$. Then
$P\in \u_2(\ell_2)$ but $P^2\not\in\u_4(\ell_2)$.

\bigskip

Suppose we have a sequence of ideals which is related to a sequence
of tensor norms. In this case, the multiplication property of the
ideal has a translation into properties of the tensor norms, as
shown in Proposition~\ref{maximini} below. First we need the
following proposition, that we believe is of independent interest.
Recall that a normed ideal of linear operators is said to be closed
if the norm considered is the usual operator norm.

\begin{proposition}\label{composition}
Let $\s$ be a sequence of polynomial ideals and $\mathfrak C$ be a
closed ideal of operators. Suppose that there exists a constant $M$ such that for each
$P\in\u_k(E)$ and $Q\in\u_l(E)$ it holds that $PQ\in\u_{k+l}(E)$ with
$$
\|PQ\|_{\u_{k+l}(E)}\le M\|P\|_{\u_{k}(E)}\|Q\|_{\u_{l}(E)}.
$$
Then, the sequence $\{\u_k\circ \mathfrak C\}_k$ has the same
property.
\end{proposition}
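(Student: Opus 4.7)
The plan is to use a common direct-sum factorization to combine the separate factorizations of $P$ and $Q$ through $\mathfrak{C}$ into one factorization of the product $PQ$. Given $P\in(\u_k\circ\mathfrak{C})(E)$ and $Q\in(\u_l\circ\mathfrak{C})(E)$, I would begin with arbitrary factorizations $P=\tilde P\circ T_1$ with $T_1\in\mathfrak{C}(E,F_1)$, $\tilde P\in\u_k(F_1)$, and $Q=\tilde Q\circ T_2$ with $T_2\in\mathfrak{C}(E,F_2)$, $\tilde Q\in\u_l(F_2)$. The goal is to push both factorizations through a single operator of $\mathfrak{C}$ landing in a common Banach space.

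Consider $T=(T_1,T_2)\colon E\to F_1\oplus_\infty F_2$, where the codomain carries the $\ell_\infty$-direct-sum norm. Writing $T=i_1T_1+i_2T_2$ with $i_j$ the canonical inclusions (each of norm one), the operator-ideal axioms for $\mathfrak{C}$ give $T\in\mathfrak{C}(E,F_1\oplus_\infty F_2)$, and since $\mathfrak{C}$ is closed we have $\|T\|_{\mathfrak{C}}=\|T\|\le\max(\|T_1\|,\|T_2\|)$. Let $\pi_1,\pi_2$ denote the coordinate projections, which are also contractive for the $\ell_\infty$-sum, and define the liftings $\hat P=\tilde P\circ\pi_1\in\u_k(F_1\oplus_\infty F_2)$ and $\hat Q=\tilde Q\circ\pi_2\in\u_l(F_1\oplus_\infty F_2)$. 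The ideal property of $\u_k,\u_l$ yields $\|\hat P\|_{\u_k}\le\|\tilde P\|_{\u_k}$ and $\|\hat Q\|_{\u_l}\le\|\tilde Q\|_{\u_l}$, while $\pi_1T=T_1$ and $\pi_2T=T_2$ give $P=\hat P\circ T$, $Q=\hat Q\circ T$, hence $PQ=(\hat P\cdot\hat Q)\circ T$.

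Applying the multiplicativity-type hypothesis on $\s$ to the polynomials $\hat P$ and $\hat Q$ on $F_1\oplus_\infty F_2$, we get $\hat P\cdot\hat Q\in\u_{k+l}$ with $\|\hat P\cdot\hat Q\|_{\u_{k+l}}\le M\|\tilde P\|_{\u_k}\|\tilde Q\|_{\u_l}$. The factorization $PQ=(\hat P\cdot\hat Q)\circ T$ then places $PQ$ in $\u_{k+l}\circ\mathfrak{C}$ and the composition-ideal norm satisfies
\[
\|PQ\|_{\u_{k+l}\circ\mathfrak{C}}\le\|\hat P\cdot\hat Q\|_{\u_{k+l}}\,\|T\|_{\mathfrak{C}}^{k+l}\le M\|\tilde P\|_{\u_k}\|\tilde Q\|_{\u_l}\max(\|T_1\|_{\mathfrak{C}},\|T_2\|_{\mathfrak{C}})^{k+l}.
\]
A harmless rescaling $T_j\mapsto T_j/\|T_j\|_{\mathfrak{C}}$, absorbing the scalar into $\tilde P$ or $\tilde Q$ (which leaves the factorization-norm products $\|\tilde P\|_{\u_k}\|T_1\|_{\mathfrak{C}}^k$ and $\|\tilde Q\|_{\u_l}\|T_2\|_{\mathfrak{C}}^l$ unchanged) collapses the last factor to $1$; taking the infimum over factorizations of $P$ and $Q$ then yields $\|PQ\|_{\u_{k+l}\circ\mathfrak{C}}\le M\|P\|_{\u_k\circ\mathfrak{C}}\|Q\|_{\u_l\circ\mathfrak{C}}$.

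The main delicate point is the choice of direct-sum norm: any other natural norm on $F_1\oplus F_2$ would inflate either $\|T\|$ or one of the projection norms $\|\pi_j\|$, injecting a spurious factor $2^{k+l}$ (or worse) into the bound. The $\ell_\infty$-sum is exactly what simultaneously makes the inclusions $i_j$, the projections $\pi_j$, and the joint operator $T$ contractive in the right sense, so that the constant $M$ from the hypothesis is transported unchanged to the conclusion.
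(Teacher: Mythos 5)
Your proof is correct and follows essentially the same route as the paper: both factor $P$ and $Q$ through a single operator into the $\ell_\infty$-sum $F_1\oplus_\infty F_2$ (your $T=(T_1,T_2)$ is the paper's $\tilde S+\tilde T$, and your $\hat P\cdot\hat Q$ is the paper's $R(y_1,y_2)=\tilde P(y_1)\tilde Q(y_2)$), use closedness of $\mathfrak C$ so that the combined operator has $\mathfrak C$-norm equal to the maximum of the two, and then take the infimum over factorizations. The only cosmetic difference is that the paper normalizes $\|S\|_{\mathfrak C}=\|T\|_{\mathfrak C}=1$ at the outset, whereas you rescale at the end.
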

\begin{proof}Take $P\in\u_k\circ \mathfrak C(E)$ and $Q\in\u_l\circ \mathfrak C(E)$
and write them as $P=\tilde P\circ S$ and $Q=\tilde Q\circ T$, with
$S\in \mathfrak C(E,E_1)$, $T\in \mathfrak C(E,E_2)$,
$\|S\|_{\mathfrak C(E,E_1)}=\|T\|_{\mathfrak C(E,E_2)}=1$, $\tilde
P\in \u_k(E_1)$ and $\tilde Q\in \u_l(E_2)$. We  consider the
product space $E_1\times E_2$ with the supremum norm and define
$\tilde S:E\to E_1\times E_2$ and $\tilde T:E\to E_1\times E_2$ by
$\tilde S(x)=(S(x),0)$ and $\tilde T(x)=(0,T(x))$. Clearly, $\tilde
S$ and $\tilde T$ belong to $\mathfrak C(E,E_1\times E_2)$ and so is
$\tilde S+\tilde T$. Moreover, the norm of $\tilde S+\tilde T$ in
$\mathfrak C$ is the maximum of those of $S$ and $T$, thus $\|\tilde
S+\tilde T\|_{\mathfrak C(E,E_1\times E_2)}=1$.

On the other hand, in a similar way we can see that $R:E_1\times
E_2\to \mathbb K $ given by $R(y_1,y_2)=\tilde P(y_1)\tilde
Q(y_2)$ belongs to $\u_{k+l}(E_1\times E_2)$, and
$\|R\|_{\u_{k+l}(E_1\times E_2)}\le M\|\tilde
P\|_{\u_k(E_1)}\|\tilde Q\|_{\u_l(E_2)}$. Since $PQ=R\circ(\tilde
S+\tilde T)$, we have that $PQ$ belongs to $\u_{k+l}\circ
\mathfrak C$. Moreover,
$$\|PQ\|_{\u_{k+l}\circ \mathfrak C(E)}\le
\|R\|_{\u_{k+l}(E_1\times E_2)}\|\tilde S+\tilde T\|_{\mathfrak
C(E,E_1\times E_2)}\le M\|\tilde P\|_{\u_k(E_1)}\|\tilde
Q\|_{\u_l(E_2)}.$$
Considering all the possible factorizations of
$P$ and $Q$ (with operators of norm 1) we obtain the desired norm
estimate.
\end{proof}

\begin{corollary}
Let $\s$ be a multiplicative sequence and $\mathfrak C$ a closed
ideal of operators. Then $\{\u_k\circ \mathfrak C\}_k$ is a
multiplicative sequence.
\end{corollary}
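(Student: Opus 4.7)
The plan is to verify the two defining properties of multiplicativity for $\{\u_k \circ \mathfrak C\}_k$: the product inequality with geometric constant $M^{k+l}$, and coherence in the sense of Definition \ref{deficoherente}.

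For the product inequality, the work is essentially done by Proposition \ref{composition}. Its proof is stated with a single constant $M$, but it in fact establishes, for each \emph{fixed} pair $(k,l)$, the implication ``$\|PQ\|_{\u_{k+l}}\le C_{k,l}\|P\|_{\u_k}\|Q\|_{\u_l}$ for $\u$'' $\Rightarrow$ ``$\|PQ\|_{\u_{k+l}\circ\mathfrak C}\le C_{k,l}\|P\|_{\u_k\circ\mathfrak C}\|Q\|_{\u_l\circ\mathfrak C}$ for $\u\circ\mathfrak C$'' with the same constant $C_{k,l}$. Taking $C_{k,l}=M^{k+l}$, which is available because $\u$ is multiplicative, yields exactly the multiplicative product bound for $\{\u_k\circ\mathfrak C\}_k$.

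For coherence, I would check the two conditions directly on factorizations. Condition (i) is a quick computation: if $P=\tilde P\circ S$ is a normalized factorization of $P\in\u_{k+1}\circ\mathfrak C(E)$, then $P_a=\tilde P_{S(a)}\circ S$, so coherence of $\u$ together with $\|S(a)\|\le\|a\|$ gives the bound, and one takes infima over factorizations. Condition (ii) is the delicate one: for $P=\tilde P\circ S\in\u_k\circ\mathfrak C(E)$ and $\gamma\in E'$, the functional $\gamma$ need not factor through $E_1$, so $\gamma P$ does not factor directly. The remedy is to enlarge the target space to $E_1\times\mathbb K$ (with sup norm) and define $\tilde S:E\to E_1\times\mathbb K$ by $\tilde S(x)=(S(x),\gamma(x)/\|\gamma\|)$; since every operator ideal contains the rank-one operators, $\tilde S\in\mathfrak C(E,E_1\times\mathbb K)$ with $\|\tilde S\|_{\mathfrak C}\le 1$. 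Setting $R(y,\lambda)=\|\gamma\|\,\lambda\,\tilde P(y)$ one has $\gamma P=R\circ\tilde S$, and writing $R=\|\gamma\|\cdot\ell\cdot(\tilde P\circ\pi_1)$ with $\ell,\pi_1$ the coordinate maps places $R$ in $\u_{k+1}(E_1\times\mathbb K)$ with norm $\le D\|\gamma\|\,\|\tilde P\|_{\u_k(E_1)}$ by coherence condition (ii) of $\u$. Taking infima over factorizations of $P$ closes the argument.

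The main obstacle is condition (ii) of coherence, which requires grafting the ``multiplication by $\gamma$'' structure onto a factorization through $\mathfrak C$; the product inequality itself is a direct transfer via Proposition \ref{composition}, and condition (i) of coherence is essentially automatic.
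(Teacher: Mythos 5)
Your proposal is correct, and it diverges from the paper in one half of the argument. For the product inequality you do exactly what the paper does: invoke Proposition~\ref{composition}, and your remark that its proof transfers the bound pairwise, for each fixed $(k,l)$ with the same constant $C_{k,l}$, is a worthwhile clarification, since a multiplicative sequence only provides the non-uniform constants $M^{k+l}$ and so does not literally satisfy the proposition's stated hypothesis of a single constant $M$. For coherence, however, the paper simply cites \cite[Proposition 3.1]{CarDimMur09} (coherence is preserved under composition with an operator ideal), whereas you re-prove both coherence conditions directly on factorizations: condition (i) via $P_a=\tilde P_{S(a)}\circ S$, and condition (ii) via the enlargement $\tilde S(x)=(S(x),\gamma(x)/\|\gamma\|)$ into $E_1\times\mathbb K$ with $R(y,\lambda)=\|\gamma\|\,\lambda\,\tilde P(y)$, so that $\gamma P=R\circ\tilde S$; this is sound, recovers the same constants $C$ and $D$, and mirrors the enlargement trick used in the proof of Proposition~\ref{composition} itself. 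The only point worth making explicit is that the bound $\|\tilde S\|_{\mathfrak C}\le 1$ uses the closedness of $\mathfrak C$ (so that the ideal norm is the operator norm of $\tilde S$, which is $\le 1$ for the sup norm on $E_1\times\mathbb K$); membership of $\tilde S$ in $\mathfrak C$ follows, as you say, from the ideal property together with the fact that operator ideals contain finite-rank maps. The paper's route is shorter by outsourcing coherence to the earlier reference; yours is self-contained and makes the constant bookkeeping transparent.
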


\begin{proof} Just combine Proposition~\ref{composition} and \cite[Proposition 3.1]{CarDimMur09}.
\end{proof}

Now we turn our attention to tensor norms. We say that a polynomial ideal $\u_k$ is associated to the finitely generated $k$-fold  symmetric tensor norm $\alpha_k$ if for each finite dimensional normed space $M$ we have the isometry
$$  \u_k(M) :\overset 1 = \big( \otimes^{k,s} M', \alpha_k \big).$$ It is clear that each tensor norm has unique maximal and minimal associated ideals.

The following result is the announced translation of the multiplication property into a tensorial setting. By $\sigma$ we denote the symmetrization operator on the corresponding tensor product.

\begin{proposition}\label{maximini}
For each $k$ natural numbers, let $\alpha_k$ be finitely generated $k$-fold symmetric tensor norm $\alpha_k$. Consider $\u^{max}_k$ and $\u^{min}_k$ the maximal and minimal ideals associated to $\alpha_k$. Fixed $c_{k,l}>0$, the following assertions
are equivalent.
\newline ($i$) For every Banach space $E$, if $P\in\u^{max}_k(E)$ and $Q\in\u^{max}_l(E)$ then
$PQ\in\u^{max}_{k+l}(E)$ and
$$
\|PQ\|_{\u^{max}_{k+l}(E)}\le c_{k,l}\|P\|_{\u^{max}_{k}(E)}\|Q\|_{\u^{max}_{l}(E)}.
$$
\newline ($ii$) For every Banach space $E$, if $P\in\u^{min}_k(E)$ and $Q\in\u^{min}_l(E)$ then
$PQ\in\u^{min}_{k+l}(E)$ and
$$
\|PQ\|_{\u^{min}_{k+l}(E)}\le c_{k,l}\|P\|_{\u^{min}_{k}(E)}\|Q\|_{\u^{min}_{l}(E)}.
$$
($iii$) For every Banach space $E$, if $s\in\TS{k}_{\alpha_k}E'$
and $t\in\TS{l}_{\alpha_l}E'$, then
$$\alpha_{k+l}(\sigma(s\otimes t))\le c_{k,l}\alpha_k(s)\alpha_l(t).$$

\end{proposition}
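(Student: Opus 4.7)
The plan is to prove all three equivalences by reducing each condition to its restriction to finite-dimensional spaces, where (i), (ii) and (iii) literally coincide. The essential dictionary is that on a finite-dimensional Banach space $M$, the defining isometry $\u_k^{max}(M) = \u_k^{min}(M) = (\bigotimes\nolimits^{k,s} M', \alpha_k)$ identifies $P \in \u_k(M)$ with a symmetric tensor $s$ via $P(x) = \langle s, x^{\otimes k}\rangle$, and under this identification the pointwise product $PQ$ corresponds to $\sigma(s \otimes t)$, since $\langle \sigma(s\otimes t), x^{\otimes(k+l)}\rangle = \langle s, x^{\otimes k}\rangle\langle t, x^{\otimes l}\rangle$. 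Thus on finite-dimensional $M$, each of the three inequalities is literally the tensor norm inequality in $\bigotimes\nolimits^{k,s} M'$.

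For (iii) $\Rightarrow$ (i), I would invoke the representation $\|P\|_{\u_k^{max}(E)} = \sup\{ \|P|_M\|_{\u_k(M)} : M \subset E \text{ finite-dimensional}\}$, standard for the maximal hull of a finitely generated tensor norm. Given $P \in \u_k^{max}(E)$ and $Q \in \u_l^{max}(E)$, for each such $M$ the dictionary above together with (iii) applied in the ambient space $M$ gives $\|(PQ)|_M\|_{\u_{k+l}(M)} \le c_{k,l}\|P|_M\|_{\u_k(M)}\|Q|_M\|_{\u_l(M)}$; taking the supremum over $M$ yields (i). Conversely, for (i) $\Rightarrow$ (iii), algebraic tensors $s,t \in \bigotimes\nolimits^{k,s} E'$ both lie in $\bigotimes\nolimits^{k,s} G$ for some finite-dimensional $G \subset E'$; setting $N := G'$ one has $G = N'$ by finite-dimensional reflexivity, and applying (i) on $N$ yields the tensor inequality on $\bigotimes\nolimits^{k,s} G$. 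Since $\alpha_k$ is finitely generated, its value on $\bigotimes\nolimits^{k,s} E'$ is the infimum of its values on $\bigotimes\nolimits^{k,s} F$ over finite-dimensional $F$ containing the tensor, so enlarging $G$ to contain near-minimizers for both $s$ and $t$ makes the $G$-values arbitrarily close to the $E'$-values; combined with the metric mapping inequality for $G \hookrightarrow E'$ applied to $\sigma(s \otimes t)$, this yields (iii) upon letting the approximation error tend to $0$.

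The equivalence (ii) $\Leftrightarrow$ (iii) follows the same pattern. The direction (ii) $\Rightarrow$ (iii) is verbatim the previous argument, since on finite-dimensional $M$ one has $\u_k^{min}(M) = \u_k^{max}(M) = \u_k(M)$. For (iii) $\Rightarrow$ (ii), I would use the standard realization of the minimal hull as the quotient of the completion of $(\bigotimes\nolimits^{k,s} E', \alpha_k)$ by the kernel of the canonical map into $\p^k(E)$, endowed with the quotient norm. By density, (iii) on algebraic tensors extends continuously to the completions, so the symmetrized multiplication $(s,t) \mapsto \sigma(s \otimes t)$ is bounded with constant $c_{k,l}$; given $P \in \u_k^{min}(E)$ and $Q \in \u_l^{min}(E)$, choosing lifts $\tilde P, \tilde Q$ in the respective completions whose $\alpha$-norms are arbitrarily close to $\|P\|_{\u_k^{min}}$ and $\|Q\|_{\u_l^{min}}$, the element $\sigma(\tilde P \otimes \tilde Q)$ is a lift of $PQ$ whose norm is controlled by $c_{k,l}\|P\|_{\u_k^{min}}\|Q\|_{\u_l^{min}}$ up to error, yielding (ii). The main obstacle throughout is the careful bookkeeping of finite generation of $\alpha_k$ when passing between the value of the tensor norm on a finite-dimensional subspace and its value on the ambient $E'$, which requires enlarging the finite-dimensional subspace in a single step to approximate both $s$ and $t$ simultaneously.
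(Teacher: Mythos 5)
Your argument is correct, and on the maximal side it is essentially the paper's proof: the finite-dimensional dictionary identifying the product $PQ$ with $\sigma(s\otimes t)$, the fact that membership and norm in $\u^{max}_{k+l}(E)$ are determined by restrictions to finite-dimensional subspaces (which is how the paper gets that ($ii$) or ($iii$) implies ($i$)), and the finite-generation-plus-metric-mapping-property argument for ($i$)$\Rightarrow$($iii$) all coincide; your use of a single enlarged subspace $G$ containing near-minimizers for both $s$ and $t$, instead of the paper's two subspaces $M,N$ with the norm computed in $\bigotimes\nolimits^{k+l,s}(M+N)$, is only a bookkeeping variant. Where you genuinely diverge is the minimal case: the paper deduces ($i$)$\Rightarrow$($ii$) from the identity $\u^{min}_k=\u^{max}_k\circ\overline{\mathcal F}$ together with Proposition~\ref{composition}, so the constant is transferred with no tensor-level work, whereas you prove ($iii$)$\Rightarrow$($ii$) directly by extending the bilinear map $(s,t)\mapsto\sigma(s\otimes t)$ to the completions and lifting $P$ and $Q$ through the metric surjection of $\widetilde{\bigotimes}\nolimits^{k,s}_{\alpha_k}E'$ onto $\u^{min}_k(E)$ given by the representation theorem for minimal ideals (here one should note, as you implicitly do, that $\sigma(\tilde P\otimes\tilde Q)$ is indeed a lift of $PQ$, because on algebraic tensors the associated polynomial of $\sigma(s\otimes t)$ is the product of the associated polynomials and the canonical map into $\p^{k+l}(E)$ is continuous). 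Your route has the advantage of being independent of ($i$) and of the composition machinery, but it invokes Floret's quotient description of minimal ideals as an external ingredient; the paper's route buys economy by reusing Proposition~\ref{composition}, which is also why that proposition was stated for a general closed operator ideal. Both arguments yield the same constant $c_{k,l}$, so the proposal is a valid alternative proof.
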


\begin{proof}
The three statements are clearly equivalent if $E$ is a finite dimensional Banach space. By the very definition of maximal polynomial ideals \cite{FloHun02}, if ($i$) holds for finite dimensional Banach spaces, then it also holds for every Banach space. As a consequence, $(i)$ is implied by either ($ii$) or by ($iii$).

We now prove that ($i$) implies ($iii$).
Note that ($iii$) is equivalent to prove that the bilinear map $\phi_E:\Big(\TS{k}_{\alpha_k}E'\times\TS{l}_{\alpha_l}E',\|\cdot\|_\infty\Big)\to  \TS{k+l}_{\alpha_{k+l}}E$, $\phi_E(s,t)=\sigma(s\otimes t)$ is continuous of norm $\le c_{k,l}$ for every Banach space $E$. If ($i$) is true then $\phi_S$ is continuous (with norm $\le c_{k,l}$) for every finite dimensional Banach space $S$.
Let $M,N$ be two finite dimensional subspaces of $E'$ such that $s\in\TS{k}M$ and $t\in\TS{l}N$. Then
\begin{eqnarray*}
\alpha_{k+l}\big(\sigma(s\otimes t),\TS{k+l}M+N\big) &\le & c_{k,l}\max\{\alpha_{k}\big(s,\TS{k}M+N\big),\alpha_{l}\big(t,\TS{l}M+N\big)\} \\
& \le & c_{k,l}\max\{\alpha_{k}\big(s,\TS{k}M\big),\alpha_{l}\big(t,\TS{l}N\big)\},
\end{eqnarray*}
where the second inequality is true by the metric mapping property.
Taking the infimum over $M$ and $N$ we obtain that $\|\phi_E\|\le
c_{k,l}$ and thus we have ($iii$).

To see that $(i)$ implies $(ii)$, just note that $\u^{min}_k=\u^{max}_k\circ\overline{\mathcal F}$ and use Proposition~\ref{composition}.
\end{proof}

\begin{remark}\label{analogously}
(a) Note that in the proof of the previous proposition, we have shown that if $\s$ or $\{\alpha_k\}_k$ satisfy any of the three conditions on spaces of finite dimension, then the three statements of previous proposition hold for every Banach space.

(b) The analogous statement holds for the coherence conditions in
Definition~\ref{deficoherente}.

(c) Condition (iii) is one of the inequalities fulfilled by a
``family of complemented symmetric seminorms'', defined by C. Boyd
and S. Lassalle in \cite{BoyLas08}.

\end{remark}

\begin{corollary}\label{min y max}
Let $\s$ be a multiplicative sequence. Then $\{\u_k^{min}\}_k$ and
$\{\u_k^{max}\}_k$ are multiplicative sequences.
\end{corollary}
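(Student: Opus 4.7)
The plan is to reduce the problem to the finite-dimensional setting and then invoke the equivalences established in Proposition \ref{maximini} together with Remark \ref{analogously}. The crucial point is that for any polynomial ideal $\u_k$ with associated finitely generated symmetric tensor norm $\alpha_k$, the three ideals $\u_k$, $\u_k^{max}$ and $\u_k^{min}$ coincide isometrically on finite dimensional Banach spaces; this is just the definition of being associated to $\alpha_k$ (and the fact that max and min hulls agree there).

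First I would record, for each pair $k,l$, the multiplication constant $c_{k,l}=M^{k+l}$ coming from the multiplicativity hypothesis on $\{\u_k\}_k$. Restricting the inequality
\[
\|PQ\|_{\u_{k+l}(F)}\le c_{k,l}\,\|P\|_{\u_k(F)}\|Q\|_{\u_l(F)}
\]
to a finite dimensional $F$ and using the isometric identifications above, we obtain exactly the same inequality with $\u_k$ replaced by $\u_k^{max}$ (resp.\ $\u_k^{min}$). Thus condition (i) of Proposition \ref{maximini} (and equivalently (ii) and (iii)) holds on every finite dimensional space, with constants $c_{k,l}=M^{k+l}$.

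Next, I would apply Remark \ref{analogously}(a), which states that once any of the three equivalent conditions of Proposition \ref{maximini} is verified on finite dimensional spaces, all three hold on arbitrary Banach spaces. This delivers at once the multiplication inequality for both $\{\u_k^{max}\}_k$ and $\{\u_k^{min}\}_k$ on every Banach space, with the same constant $M$.

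To conclude that the sequences are multiplicative (and not merely satisfying the multiplication inequality), I would verify coherence of $\{\u_k^{max}\}_k$ and $\{\u_k^{min}\}_k$ by exactly the same mechanism, using Remark \ref{analogously}(b): the coherence inequalities of Definition \ref{deficoherente} hold for $\{\u_k\}_k$ on finite dimensional spaces by hypothesis, therefore for $\{\u_k^{max}\}_k$ and $\{\u_k^{min}\}_k$ on finite dimensional spaces by the isometric coincidence, and hence on every Banach space by the analogue of Remark \ref{analogously}(a) for coherence. I do not anticipate a serious obstacle here; the only subtlety is making sure the constants transfer correctly, which follows from the fact that the identifications on finite dimensional spaces are isometric, so the same $C$, $D$ and $M$ work throughout.
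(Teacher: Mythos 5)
Your proposal is correct and follows essentially the same route the paper intends: the corollary is stated without proof precisely because it is meant to follow from Proposition \ref{maximini} together with Remark \ref{analogously}, using that $\u_k$, $\u_k^{max}$ and $\u_k^{min}$ all coincide isometrically with $\big(\otimes^{k,s}M',\alpha_k\big)$ on finite dimensional spaces, so both the multiplication inequality (with $c_{k,l}=M^{k+l}$) and the coherence inequalities transfer from the finite dimensional case to arbitrary Banach spaces. Your handling of the constants and of coherence via Remark \ref{analogously}(b) matches the paper's argument.
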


In \cite{CarGal-4}, natural tensor norms for arbitrary order are
introduced and studied, in the spirit of the natural tensor norms of
Grothendieck. Let us see that the polynomial ideals associated to
the natural symmetric tensor norms are multiplicative. First we
introduce some notation.

For a symmetric tensor norm $\beta_k$ (of order $k$), the projective
and injective associates (or hulls) of $\beta_k$ will be  denoted,
by extrapolation of the 2-fold case, as $\bs \beta_k /$ and $/
\beta_k \bs$ respectively. They are defined as the tensor norms
induced by the following mappings (see \cite[p. 489]{DefFlo93}):
$$ \big( \otimes^{k,s} \ell_1(B_E),  \beta_k  \big) \overset 1 \twoheadrightarrow \big( \otimes^{k,s}  E,   \bs \beta_k /  \big).$$
$$ \big( \otimes^{k,s} E, / \beta_k \bs \big) \overset 1 \hookrightarrow  \big( \otimes^{k,s} \ell_{\infty}(B_{{E}'}),  \beta_k \big).$$

Recall that for a symmetric tensor norm $\beta_k$, its dual tensor norm $\beta_k'$ is defined on finite dimensional normed spaces by
$$  \big( \otimes^{k,s} M, \beta_k' \big) :\overset 1 = [\big( \otimes^{k,s} M', \beta_k \big)]'$$
and then extended to Banach spaces so that it is finitely generated (see \cite[4.1]{Flo01}).

We say that  $\beta_k$ is a natural symmetric tensor norm (of order
$k$) if $\beta_k$ is obtained from $\pi_k$ with a finite number of
the operations $\setminus  \ /$, $/ \ \setminus$, $'$.

 For $k\ge 3$, it is
shown in \cite{CarGal-4} that there are exactly six non-equivalent
natural tensor norms (note that for $k=2$ there are only four). They
can be arranged in the following diagram:
\begin{equation*}
\begin{array}{rcl}
 & \pi_k &  \\
 &  \uparrow &  \\
 & \bs / \pi_k \bs / &  \\
 \nearrow & &\nwarrow  \\
/ \pi_k \bs &  & \bs \varepsilon_k / \\
\nwarrow & &  \nearrow \\
& / \bs \varepsilon_k / \bs & \\
 &  \uparrow &  \\
& \varepsilon_k & \\
\end{array}
\end{equation*}
where $\alpha_k \to \gamma_k$ means that $\gamma_k$ dominates
$\alpha_k$. There are no other dominations.

Therefore, we have six ``natural
sequences''  $\{\alpha_k\}_k$ of symmetric tensor norms, with their corresponding associated polynomial ideals.
If, as usual, we denote $/\pi_k \bs$ by $\eta_k$, we have
$\eta_k'=\bs \varepsilon _k/$, $\bs \eta_k / =\bs / \pi_k \bs / $
and $/ \eta_k' \bs =/ \bs \varepsilon_k / \bs$.

For the  multiplicativity of the polynomial ideals associated to the
natural symmetric tensor norms we need the following:

\begin{lemma}For each $k$, let $\alpha_k$ be a finitely generated $k$-fold symmetric tensor and suppose there is a constant $c_{k,l}>0$ such that
$\alpha_{k+l}(\sigma(s\otimes t))\le c_{k,l}\alpha_k(s)\alpha_l(t)$
for every $s\in\TS{k}_{\alpha_k}E'$ and $t\in\TS{l}_{\alpha_l}E'$.
Then the same inequality holds for the sequences $\{/ \alpha_k \bs
\}_k$ and $\{\bs \alpha_k /  \}_k$.
\end{lemma}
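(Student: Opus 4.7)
The plan is to treat the two associates separately, reducing each to the hypothesis by exploiting the defining universal property of that associate: the projective associate via lifting through the quotient $\otimes^{k,s}\ell_1(B_{E'})\twoheadrightarrow\otimes^{k,s}E'$, the injective associate via restriction from the isometric embedding $\otimes^{k,s}E'\hookrightarrow\otimes^{k,s}\ell_\infty(B_{E''})$. Since the inequality involves the symmetrization of tensors on $E'$ (this is the side on which the polynomial ideal acts), I will just apply the definitions of the associates to the Banach space $E'$.

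For the projective case $\{\bs\alpha_k/\}_k$, I would start with $s\in\TS{k}_{\bs\alpha_k/}E'$ and $t\in\TS{l}_{\bs\alpha_l/}E'$ and, fixing $\varepsilon>0$, lift them through the metric surjection $q\colon\ell_1(B_{E'})\to E'$ to symmetric tensors $\tilde s\in\TS{k}_{\alpha_k}\ell_1(B_{E'})$ and $\tilde t\in\TS{l}_{\alpha_l}\ell_1(B_{E'})$ with $\alpha_k(\tilde s)\le(1+\varepsilon)\bs\alpha_k/(s)$ and $\alpha_l(\tilde t)\le(1+\varepsilon)\bs\alpha_l/(t)$. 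The hypothesis applied to $\ell_1(B_{E'})$ gives $\alpha_{k+l}(\sigma(\tilde s\otimes\tilde t))\le c_{k,l}\alpha_k(\tilde s)\alpha_l(\tilde t)$. The symmetrized tensor map $\sigma\circ(\cdot\otimes\cdot)$ commutes with $q^{\otimes(k+l)}$ (since $q$ is applied factorwise and permutations of factors commute with this), so $q^{\otimes(k+l)}(\sigma(\tilde s\otimes\tilde t))=\sigma(s\otimes t)$. Since the induced map on symmetric tensor products is still a metric surjection, one obtains $\bs\alpha_{k+l}/(\sigma(s\otimes t))\le c_{k,l}(1+\varepsilon)^2\bs\alpha_k/(s)\bs\alpha_l/(t)$, and letting $\varepsilon\to 0$ finishes the projective case.

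For the injective case $\{/\alpha_k\bs\}_k$, the argument is even cleaner: let $\iota\colon E'\hookrightarrow\ell_\infty(B_{E''})$ be the canonical isometric embedding. By definition of $/\alpha_k\bs$, the map $\iota^{\otimes k}$ induces an isometry $(\TS{k}E',/\alpha_k\bs)\hookrightarrow(\TS{k}\ell_\infty(B_{E''}),\alpha_k)$, and analogously for order $l$ and $k+l$. Given $s,t$ on the $E'$ side, apply the hypothesis to $\ell_\infty(B_{E''})$ to obtain
$$\alpha_{k+l}\bigl(\sigma(\iota^{\otimes k}(s)\otimes\iota^{\otimes l}(t))\bigr)\le c_{k,l}\alpha_k(\iota^{\otimes k}(s))\,\alpha_l(\iota^{\otimes l}(t)).$$
Using once more that symmetrization commutes with the factorwise action of $\iota$, the left hand side equals $/\alpha_{k+l}\bs(\sigma(s\otimes t))$, while the right hand side equals $c_{k,l}\,/\alpha_k\bs(s)\,/\alpha_l\bs(t)$ by isometry, which is the required bound.

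The only point that requires a little care, and which I expect to be the main (minor) obstacle, is the commutation of the symmetrization operator with the functorial action $q^{\otimes k}$ and $\iota^{\otimes k}$ on symmetric tensor products, together with the fact that the metric surjection / isometric embedding induced on the ambient full tensor products continues to be a metric surjection / isometric embedding after restriction to the symmetric subspace endowed with the $s$-tensor norm. Both facts are by now standard (see \cite{Flo01,CarGal-4}) and fit in a couple of lines, so the proof should remain short.
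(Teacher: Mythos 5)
Your argument is correct, and half of it coincides with the paper: for the injective hull $\{/\alpha_k\bs\}_k$ the paper does exactly what you do, namely push $s,t$ through the canonical (isometric, factorwise) embedding $i_k=\otimes^k i$ into $\big(\otimes^{k,s}\ell_\infty(B_{E''}),\alpha_k\big)$, use that $i_{k+l}(\sigma(s\otimes t))=\sigma(i_k(s)\otimes i_l(t))$, and apply the hypothesis there. For the projective hull $\{\bs\alpha_k/\}_k$, however, your route is genuinely different. You argue directly at the tensor level: lift $s$ and $t$ through the metric surjection $\otimes^{k,s}\ell_1(B_{E'})\twoheadrightarrow\otimes^{k,s}E'$ with an $\varepsilon$ loss, apply the hypothesis on $\ell_1(B_{E'})$, and push back down using that symmetrization commutes with $\otimes^{k+l}q$; since $\bs\alpha_{k+l}/$ is by definition the quotient norm induced by this surjection, the estimate follows at once (and note $\ell_1(B_{E'})$ and $\ell_\infty(B_{E''})$ are dual spaces, so even the literal form of the hypothesis, stated for tensors over duals, applies). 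The paper instead dualizes: it uses $\bs\alpha_k/=(/\alpha_k'\bs)'$, the representation theorem for maximal polynomial ideals to identify the ideal associated to $\bs\alpha_k/$ at $E$ with the polynomials extending (via Hahn--Banach) to $\alpha_k'$-continuous polynomials on $\ell_\infty(B_{E'})$, deduces multiplicativity of that ideal from multiplicativity on $\ell_\infty(B_{E'})$, and then transfers back to the tensor-norm inequality through Proposition~\ref{maximini}. Your argument is more elementary and self-contained, treats both hulls symmetrically, and avoids the maximal-ideal machinery; the paper's version has the advantage of reusing Proposition~\ref{maximini} and of producing along the way the extension description of the ideal associated to $\bs\alpha_k/$, which it exploits again later (e.g.\ in Lemma~\ref{palito w d}). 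The only points to make explicit in your write-up are the ones you already flag: the commutation of $\sigma$ with the factorwise maps and the fact that the quotient/embedding description of the hulls is precisely their definition, so the $(1+\varepsilon)$-lifts and the isometric identifications cost nothing.
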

\begin{proof}
The argument used in \cite[p.20]{BoyLas08} to show that $\{\eta_k\}_k$
is a complemented sequence of seminorms can be readily followed to
show the statement for $\{/ \alpha_k \bs\}_k$. Indeed, if, for all
$k$, we denote
$$
i_k=\otimes^{k}i : \big( \otimes^{k,s} E', / \alpha_k \bs \big) \overset 1
\hookrightarrow \big( \otimes^{k,s} \ell_{\infty}(B_{{E}''}),
\alpha_k \big),
$$
then
\begin{eqnarray*}
/ \alpha_{k+l} \bs(\sigma(s\otimes t)) &=&
\alpha_{k+l}\Big(i_{k+l}\big( \sigma(s\otimes t)\big)\Big)
=\alpha_{k+l}\Big(\sigma\big( i_k(s)\otimes i_l(t)\big)\Big)\\
&\leq &c_{k,l} \alpha_k( i_k(s))\alpha_l( i_l(t))= c_{k,l}/ \alpha_k
\bs (s) / \alpha_l \bs (t).
\end{eqnarray*}

 On the other hand, if $\u^{max}_k$ is the  maximal  ideal
associated to $\alpha_k$, then by Proposition \ref{maximini}, if $P\in\u^{max}_k(E)$ and $Q\in\u^{max}_l(E)$ then
$PQ\in\u^{max}_{k+l}(E)$ and $\|PQ\|_{\u^{max}_{k+l}(E)}\le c_{k,l}\|P\|_{\u^{max}_{k}(E)}\|Q\|_{\u^{max}_{l}(E)}$.
Moreover, the identity  $\bs \alpha_k / = (/
\alpha'_k \bs)'$ and the representation theorem for maximal
polynomial ideals \cite[Section 3.2]{FloHun02} show
that the maximal polynomial ideal $\mathfrak B_k$ associated to $\bs
\alpha_k /$ at $E$ is $\Big(\TS{k}_{/
\alpha'_k \bs}E\Big)'$. Thus, since $\TS{k}_{/
\alpha'_k \bs}E$ is (isometrically) a subspace of $\TS{k}_{
\alpha'_k }\ell_{\infty}(B_{E'})$, by the Hahn-Banach theorem $\mathfrak B_k(E)$ consists of all $k$-homogeneous polynomials on $E$ which extend to $\alpha_k'$-continuous polynomials on $\ell_{\infty}(B_{E'})$. That is,
$$\mathfrak B_k(E)=\{P\in \p^k(E): P \text{ extends to a polynomial
}\tilde P\in \u^{max}_k(\ell_{\infty}(B_{E'}))\},$$ and the norm of
$P$ in $\mathfrak B_k$ is given by the infimum of the
$\u^{max}_k$-norms of these extensions. Then, it is easy to see
that the product of two polynomials in $\mathfrak B$ belongs to
$\mathfrak B$ with the same inequality of norms. Using Proposition
\ref{maximini} again, we obtain the desired result for $\{\bs
\alpha_k /\}_k$.
\end{proof}

From Remark \ref{analogously}(b), a statement as in the previous lemma is true for the coherence conditions.
As a consequence, since $\pi_k$ and $\varepsilon_k$
are multiplicative, we can use the previous lemma and Proposition
\ref{maximini} to show that:

\begin{theorem}
Let $\{\alpha_k\}_k$ be any of the natural sequences of symmetric
tensor norms. Then the sequences  $\{\u^{max}_k\}_k$ and
$\{\u^{min}_k\}_k$ of maximal and minimal  ideals associated to $\{\alpha_k\}_k$ are
 multiplicative.
\end{theorem}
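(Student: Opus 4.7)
The plan is to reduce the theorem to the base cases $\alpha_k=\pi_k$ and $\alpha_k=\varepsilon_k$, and then propagate the multiplication inequality through the operations $/\,\cdot\,\backslash$ and $\backslash\,\cdot\,/$ using the lemma above, since every one of the six natural symmetric tensor norms in the diagram is reachable from $\pi_k$ or $\varepsilon_k$ by a finite number of such operations.

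First I would establish the base cases for condition (iii) of Proposition \ref{maximini}. The ideal of integral polynomials is the maximal ideal associated to $\varepsilon_k$, and by Example 2.5(c) it is multiplicative; thus Proposition \ref{maximini} (applied in the direction $(i)\Rightarrow (iii)$) gives the tensorial inequality
\[
\varepsilon_{k+l}(\sigma(s\otimes t))\le c_{k,l}\,\varepsilon_k(s)\,\varepsilon_l(t)
\]
for some constants $c_{k,l}$ of the form $M^{k+l}$. Similarly, the ideal of nuclear polynomials is the minimal ideal associated to $\pi_k$, and by Example 2.5(b) it is multiplicative, so again Proposition \ref{maximini} yields the analogous inequality for $\pi_k$. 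At this point both $\{\pi_k\}_k$ and $\{\varepsilon_k\}_k$ satisfy condition (iii).

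Next I would trace the diagram. Writing the six natural norms in terms of $\pi_k$, $\varepsilon_k$ and the operations $/\,\cdot\,\backslash$, $\backslash\,\cdot\,/$, the three norms in the upper half are $\pi_k$, $\backslash/\pi_k\backslash/$ and $\eta_k=/\pi_k\backslash$, while the three in the lower half are $\varepsilon_k$, $/\backslash\varepsilon_k/\backslash$ and $\eta'_k=\backslash\varepsilon_k/$. By the lemma just proved, the tensorial inequality in (iii) of Proposition \ref{maximini} is preserved under $/\,\cdot\,\backslash$ and under $\backslash\,\cdot\,/$, so by applying the lemma once or twice starting from $\pi_k$ or $\varepsilon_k$ one obtains condition (iii) for all six natural sequences $\{\alpha_k\}_k$. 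By Remark \ref{analogously}(b), the same iterative argument transports the coherence estimates from Definition \ref{deficoherente}, which hold for $\{\pi_k\}_k$ and $\{\varepsilon_k\}_k$, to each of the six natural sequences.

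Finally, invoking Proposition \ref{maximini} in the other direction, condition (iii) for $\{\alpha_k\}_k$ implies the multiplication inequality for both the maximal ideals $\{\mathfrak{A}^{max}_k\}_k$ and the minimal ideals $\{\mathfrak{A}^{min}_k\}_k$ associated to $\{\alpha_k\}_k$. Together with the already established coherence, this gives multiplicativity of both sequences, as desired. I do not anticipate a serious obstacle: the only delicate point is to check that no natural norm is missed when iterating $/\,\cdot\,\backslash$ and $\backslash\,\cdot\,/$ on $\pi_k$ and $\varepsilon_k$, and this is immediate from the diagram together with the identities $\eta_k=/\pi_k\backslash$ and $\eta'_k=\backslash\varepsilon_k/$ stated in the text.
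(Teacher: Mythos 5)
Your overall route is the same as the paper's: verify condition ($iii$) of Proposition~\ref{maximini} for the two base norms $\pi_k$ and $\varepsilon_k$, propagate it through the operations $/\,\cdot\,\backslash$ and $\backslash\,\cdot\,/$ via the preceding lemma (and the coherence estimates via Remark~\ref{analogously}(b)), and then come back to the maximal and minimal ideals through Proposition~\ref{maximini}. Your check that all six natural norms are reachable from $\pi_k$ and $\varepsilon_k$ by the two hull operations is also correct.

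The gap is in the base cases. With the paper's notion of association, $\mathfrak{A}_k(M)\overset{1}{=}\big(\otimes^{k,s}M',\alpha_k\big)$ for finite dimensional $M$, the integral polynomials satisfy $\mathcal{P}_I^k(M)=\otimes^{k,s}_{\pi_k}M'$, so they are the \emph{maximal ideal associated to $\pi_k$}, not to $\varepsilon_k$; the maximal ideal associated to $\varepsilon_k$ is $\mathcal{P}^k$ itself (and the minimal one is the ideal of approximable polynomials). Your assignment is moreover internally inconsistent: the nuclear polynomials are the minimal hull of the integral ones, so both must be associated to the same tensor norm, yet you attach them to $\pi_k$ and $\varepsilon_k$ respectively. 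As written, both of your citations therefore yield condition ($iii$) only for $\{\pi_k\}_k$, and the inequality $\varepsilon_{k+l}(\sigma(s\otimes t))\le c_{k,l}\,\varepsilon_k(s)\,\varepsilon_l(t)$ is never actually established. The repair is immediate: the ideal of all continuous $k$-homogeneous polynomials is multiplicative with constant $1$ (pointwise, $\|PQ\|\le\|P\|\,\|Q\|$) and is the maximal ideal associated to $\varepsilon_k$, so Proposition~\ref{maximini} gives ($iii$) for $\varepsilon_k$ (alternatively, use the approximable polynomials and the minimal version of the same proposition). With this corrected base case the remainder of your argument goes through and coincides with the paper's proof.
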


Also, it is proved in \cite{CarDimMur07} that the interpolation of
multiplicative sequences is multiplicative.

\section{Analytic structure on the spectrum}

In \cite{AroGalGarMae96} an analytic structure in the spectrum of $H_b(U)$ ($U$ an open subset of symmetrically regular Banach space) was given and it was shown that the functions in $H_b(U)$ have analytic extension to the spectrum. For the case of entire functions Dineen proved in \cite[Section 6.3]{Din99} that the extensions to the spectrum are actually of bounded type in each connected component of the spectrum.

In this section we will show that it is possible to attach an
analogous analytic structure to the spectrum $M_{b\u}(E)$ of
$H_{b\u}(E)$ for a wide class of Banach spaces $E$ and
multiplicative sequences $\u$. Then the spectrum turns out to a
Riemann domain spread over $E''$ and, as in \cite{AroGalGarMae96} or
\cite{Din99}, each connected component of $M_{b\u}(E)$ is
an analytic copy of $E''$. So we are able to define functions of the
class $H_{b\u}$ on each connected component of $M_{b\u}(E)$. It follows that with an additional
condition which is fulfilled for most of our examples we can prove
that functions in $H_{b\u}(E)$ extend to $\u$-holomorphic functions of bounded
type on each connected component of $M_{b\u}(E)$.

For $\u$ a multiplicative sequence, let us consider the spectrum $M_{b\u}(E)$ of the algebra $H_{b\u}(E)$ (i.e. the
set of continuous nonzero multiplicative functionals on $H_{b\u}$).
Since the inclusion $H_{b\u}(E)\hookrightarrow H_{b}(E)$ is continuous,
evaluations at points of $E''$ belong to $M_{b\u}(E)$.
Therefore,
$\delta_z$ is a continuous homomorphism for each $z\in E''$ and we can see $E''$ as a subset of $M_{b\u}(E)$.

Also, given $\varphi\in M_{b\u}(E)$ we can define an  element
$\pi(\varphi)\in E''$ by $\pi(\varphi)(\gamma)=\varphi(\gamma)$ for
every $\gamma\in E'$. Then the linear mapping
\begin{eqnarray*}
\pi:M_{b\u}(E)&\to& E''\\
\varphi &\mapsto & \varphi|_{E'}
\end{eqnarray*}
is a projection from $M_{b\u}(E)$ onto $E''\subset M_{b\u}(E)$. From the definition of $\pi$, for
$\varphi\in M_{b\u}(E)$ and $\gamma\in E'$ we have
$$
\varphi(\gamma^N)=\varphi(\gamma)^N=\Big(\pi(\varphi)(\gamma)\Big)^N
= \Big(AB(\gamma)(\pi(\varphi))\Big)^N = AB(\gamma^N)(\pi(\varphi)).
$$
Thus, for every finite type polynomial $P$,
$$
\varphi(P)=AB(P)(\pi(\varphi))=\delta_{\pi(\varphi)}(P).
$$
As a consequence, we have the following:
\begin{lemma}
Let $\u$ be a multiplicative sequence and $E$ a Banach space such that, for
every $k$, the finite type $k$-homogeneous polynomials are dense in
$\u_k(E)$. Then $M_{b\u}(E)=E''$.
\end{lemma}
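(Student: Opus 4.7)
The plan is to show that the canonical inclusion $E''\hookrightarrow M_{b\u}(E)$ given by $z\mapsto \delta_z$ is surjective. Since $\pi(\delta_z)=z$, it is enough to verify that every $\varphi\in M_{b\u}(E)$ satisfies $\varphi=\delta_{\pi(\varphi)}$ as an element of the continuous dual of $\hbu(E)$; this will identify $\varphi$ with an evaluation and thus force $M_{b\u}(E)=E''$.

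The computation displayed in the paragraphs just preceding the lemma already gives $\varphi(P)=\delta_{\pi(\varphi)}(P)$ for every finite type polynomial $P$. The first step is to upgrade this equality to all of $\u_k(E)$. Both $\varphi$ and $\delta_{\pi(\varphi)}$ are continuous on $\hbu(E)$, and for every $k$ the inclusion $\u_k(E)\hookrightarrow \hbu(E)$ is continuous (indeed, for $P\in\u_k(E)$ one has $p_R(P)=\|P\|_{\u_k(E)}R^k$ for each $R>0$). Therefore both $\varphi$ and $\delta_{\pi(\varphi)}$ restrict to $\|\cdot\|_{\u_k(E)}$-continuous linear functionals on $\u_k(E)$, and the density hypothesis forces them to coincide on the whole of $\u_k(E)$.

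The second step is a Taylor series argument. Given $f\in \hbu(E)$ set $P_k=\frac{d^kf(0)}{k!}\in\u_k(E)$. By the very definition of $\hbu(E)$, the series $\sum_{k\geq 0}\|P_k\|_{\u_k(E)}R^k$ is summable for every $R>0$, so the partial sums $\sum_{k\leq N}P_k$ converge to $f$ in every seminorm $p_R$, i.e.\ in the Fr\'echet topology of $\hbu(E)$. Combining the continuity of $\varphi$ and $\delta_{\pi(\varphi)}$ with the equality already obtained on each $\u_k(E)$, one gets
$$\varphi(f)=\sum_{k\geq 0}\varphi(P_k)=\sum_{k\geq 0}\delta_{\pi(\varphi)}(P_k)=\delta_{\pi(\varphi)}(f),$$
which finishes the proof.

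The argument is essentially bookkeeping once the surrounding machinery is granted; I do not expect a substantive obstacle. The only input not produced inside the proof itself is the continuity of $\delta_z$ on $\hbu(E)$ for $z\in E''$, needed to make sense of the limit in the second step and already asserted in the setup before the lemma. This continuity rests on the Aron--Berner extension satisfying $\|AB(P_k)\|_\infty\leq\|P_k\|_{\u_k(E)}$, so that $\sum AB(P_k)(z)$ converges absolutely on bounded subsets of $E''$, which is precisely how the inclusion $E''\subset M_{b\u}(E)$ is justified in the text.
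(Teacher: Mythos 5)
Your argument is correct and is exactly the route the paper intends: the displayed computation before the lemma gives $\varphi=\delta_{\pi(\varphi)}$ on finite type polynomials, and the paper leaves the remaining density-plus-Taylor-series bookkeeping (agreement on each $\u_k(E)$ by continuity of both functionals in the $\u_k$-norm, then convergence of the partial Taylor sums in the $p_R$ seminorms) implicit, which is precisely what you supply. No gaps; the continuity of $\delta_z$ for $z\in E''$ is justified as you say, via $\|P\|\le\|P\|_{\u_k(E)}$ and the Aron--Berner/Davie--Gamelin bound.
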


\begin{example} \rm
Since the finite type polynomials are dense in any minimal ideal, if
$\u$ is a multiplicative sequence of minimal ideals, then
$M_{b\u}(E)=E''$ for any Banach space $E$. In particular, this happens for the nuclear
 and the approximable polynomials, so $M_{bN}(E)=E''$ and
$M_{ba}(E)=E''$.
\end{example}

The Aron-Berner extension plays a crucial role in the analytic
structure of $M_b(E)$ given in \cite{AroGalGarMae96}. In order to
obtain a similar structure for our algebras, we need the polynomial
ideals to have a good behavior with these extensions. So let us
introduce the following:

\begin{definition}\label{AB coherente}\rm
A sequence $\u$ of scalar valued ideals of polynomials is said to be
 \textbf{$AB$-closed} if there exists a constant
$\alpha>0$ such that for each Banach space $E$, $k\in\mathbb N$ and
$P\in\u_k(E)$ we have that $AB(P)$ belongs to $\u_k(E'')$ and
$\|AB(P)\|_{\u_k(E'')}\le\alpha^k\|P\|_{\u_k(E)}$, where $AB$
denotes the Aron-Berner extension.
\end{definition}

\begin{example}\rm
The sequence $\u$ is known to be $AB$-closed with constant
$\alpha=1$ in the following cases: continuous polynomials
$\u=\{\p^k\}_k$ (see \cite{DavGam89}), integral polynomials
$\u=\{\p^k_I\}_k$ (see \cite{CarZal99}), extendible polynomials
$\u=\{\p^k_e\}_k$  (see \cite{Car99}), weakly continuous on bounded
sets polynomials $\u=\{\p^k_w\}_k$  (see \cite{Mor84}), nuclear
polynomials $\u=\{\p^k_N\}_k$, approximable polynomials
$\u=\{\p^k_a\}_k$.
\end{example}

In \cite{CarGal-3} it is shown that if $\u_k$ is a maximal or a minimal ideal, then the Aron-Berner extension is an isometry from $\u_k(E)$ into $\u_k(E'')$, extending a well known result of Davie and Gamelin~\cite{DavGam89} and analogous results for some particular polynomial ideals. Therefore, any sequence $\s$ of  maximal (or minimal) polynomial ideals is $AB$-closed with constant $\alpha=1$.
Note that all the previous examples but $\u=\{\p^k_w\}_k$ are maximal or minimal, so they are covered by this result.

\begin{example}\rm
It is easy to prove that if the sequences $\{\u_k^0\}_k$ and
$\{\u_k^1\}_k$ are $AB$-closed with constants $\alpha_0$ and
$\alpha_1$ respectively, then the interpolated sequence
$\{\u_k^{\theta}\}_k$ is $AB$-closed with constant
$\alpha_0^{1-\theta}\alpha_1^{\theta}$.
\end{example}

\begin{remark}\label{remark AB coherente}\rm
Note that as a consequence of the above definition, if $\u$ is coherent and $AB$-closed, for each
$P\in\u_k(E)$, $j<k$ and $z\in E''$, we have that
$AB(P)_{z^{k-j}}\in\u_j(E'')$ and $\|AB(P)_{z^{k-j}}\|_{\u_j(E'')}
\le (C\|z\|)^{k-j}\alpha^k\|P\|_{\u_k(E)}$.

Moreover, since the $\u_k$'s are ideals, if $Q\in\u_j(E'')$ then
$Q\circ J_E\in\u_j(E)$ and $\|Q\circ
J_E\|_{\u_j(E)}\le\|Q\|_{\u_j(E'')}$. Therefore for each
$P\in\u_k(E)$, $AB(P)_{z^{k-j}}\circ J_E\in\u_j(E)$ and
$\|AB(P)_{z^{k-j}}\circ J_E\|_{\u_j(E)}\le
(C\|z\|)^{k-j}\alpha^k\|P\|_{\u_k(E)}$, where $J_E$ denotes the canonical injenction of $E$ into $E''$.

Also note that if $f\in H_{b\u}(E)$ then $AB(f)\in H_{b\u}(E'')$ and
$p_R(AB(f))\le p_{\alpha R}(f)$.
\end{remark}


 We want now to define a topology on
$M_{b\u}(E)$ which makes $(M_{b\u}(E),\pi)$ into a Riemann domain. To do this, we need first to prove that the translation is well define in spaces of holomorphic functions associated to polynomial ideals.

\begin{lemma}\label{tau_x}
Let $\u$ be a multiplicative sequence, $E$ a Banach space and $x\in
E$. Then
$$
\begin{array}{cccc}
\tau_x : & H_{b\u}(E) & \to & H_{b\u}(E) \\
         &    f       & \mapsto & f(x+\cdot)
\end{array}
$$
is a continuous operator. In particular, if $\varphi\in H_{b\u}(E)'$ then $\varphi\circ\tau_x\in H_{b\u}(E)'$
and if $\varphi\in M_{b\u}(E)$ then $\varphi\circ\tau_x\in M_{b\u}(E)$.
\end{lemma}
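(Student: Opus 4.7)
The plan is to expand $\tau_x f$ by the binomial formula in each Taylor coefficient and then estimate $p_R(\tau_x f)$ directly using only the coherence constant $C$.

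Write $f=\sum_{k\geq 0} P_k$ with $P_k=\frac{d^kf(0)}{k!}\in\u_k(E)$. The symmetric $k$-linear form associated to $P_k$ gives, via the polarization/binomial identity,
\begin{equation*}
P_k(x+y)=\sum_{j=0}^{k}\binom{k}{j}\,\v P_k(x^{j},y^{k-j})=\sum_{j=0}^{k}\binom{k}{j}(P_k)_{x^{j}}(y),
\end{equation*}
so the $n$-homogeneous part (in $y$) of $\tau_x f$ is
\begin{equation*}
\frac{d^{n}(\tau_xf)(0)}{n!}=\sum_{k\geq n}\binom{k}{n}(P_k)_{x^{k-n}}.
\end{equation*}
By iterating the first coherence inequality $k-n$ times one gets $\|(P_k)_{x^{k-n}}\|_{\u_n(E)}\leq C^{k-n}\|P_k\|_{\u_k(E)}\|x\|^{k-n}$, which is the main technical input.

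Now I would multiply by $R^n$, sum over $n$, and interchange the order of summation (all terms are nonnegative, so Tonelli applies):
\begin{align*}
p_R(\tau_x f) &\leq \sum_{n=0}^{\infty} R^n \sum_{k\geq n} \binom{k}{n}C^{k-n}\|P_k\|_{\u_k(E)}\|x\|^{k-n} \\
&= \sum_{k=0}^{\infty}\|P_k\|_{\u_k(E)}\sum_{n=0}^{k}\binom{k}{n}R^{n}(C\|x\|)^{k-n} \\
&= \sum_{k=0}^{\infty}\|P_k\|_{\u_k(E)}\bigl(R+C\|x\|\bigr)^{k} \;=\; p_{R+C\|x\|}(f).
\end{align*}
Since $f\in H_{b\u}(E)$ gives $p_{R+C\|x\|}(f)<\infty$ for every $R>0$, the estimate shows both that the homogeneous components of $\tau_x f$ lie in $\u_n(E)$ with the required decay (so $\tau_x f\in H_{b\u}(E)$) and that $\tau_x\colon H_{b\u}(E)\to H_{b\u}(E)$ is continuous, since each seminorm $p_R$ of $\tau_x f$ is bounded by a single seminorm of $f$.

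For the last sentence, the functional $\varphi\circ\tau_x$ is continuous as a composition of continuous maps. Note $\tau_x$ is an algebra homomorphism because $\tau_x(fg)(y)=(fg)(x+y)=f(x+y)g(x+y)=\tau_x(f)(y)\,\tau_x(g)(y)$, and $\tau_x(1)=1$; so if $\varphi\in M_{b\u}(E)$ then $\varphi\circ\tau_x$ is multiplicative and sends $1$ to $1$, hence lies in $M_{b\u}(E)$. The only nontrivial point in the whole argument is the iterated use of coherence to control $(P_k)_{x^{k-n}}$, which is exactly the role coherence plays here; the rest is the bookkeeping of the binomial expansion.
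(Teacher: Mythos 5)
Your proof is correct and follows essentially the same route as the paper's: binomial expansion of each Taylor coefficient, the iterated coherence estimate $\|(P_k)_{x^{k-n}}\|_{\u_n(E)}\le C^{k-n}\|x\|^{k-n}\|P_k\|_{\u_k(E)}$, interchange of summation justified by absolute convergence, and the resulting bound $p_R(\tau_x f)\le p_{R+C\|x\|}(f)$. The final observations on $\varphi\circ\tau_x$ (continuity by composition, multiplicativity since $\tau_x$ is an algebra homomorphism) match the paper's "in particular" claim.
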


\begin{proof}
Take $f=\sum_{k=0}^{\infty}P_k\in H_{b\u}(E)$ and $x\in E$. Then $P_k(x+y)=\sum_{j=0}^k\binom{k}{j}(P_k)_{x^{k-j}}(y)$ and
 $\displaystyle \tau_xf=\sum_{k=0}^\infty\sum_{j=0}^k\binom{k}{j}(P_k)_{x^{k-j}}$. Using that the sequence is coherent it is easy to see that this series converges absolutely:
\begin{equation}
 \sum_{k=0}^\infty\sum_{j=0}^k\binom{k}{j}\big\|(P_k)_{x^{k-j}}\big\|_{\u_{j}(E)}\le \sum_{k=0}^\infty (1+C\|x\|)^k\|P_k\|_{\u_k(E)}=p_{_{1+C\|x\|}}(f).
\end{equation}
 So we can reverse the order of summation to obtain that $\frac{d^j\tau_xf(0)}{j!}=\sum_{k=j}^{\infty}\binom{k}{j}(P_k)_{x^{k-j}}$. Thus we obtain that
\begin{eqnarray*}
p_R(\tau_xf) & = &\sum_{j=0}^{\infty} R^j\Big\|\frac{d^j\tau_xf(0)}{j!}\Big\|_{\u_j(E)} \le \sum_{j=0}^{\infty} R^j\sum_{k=j}^{\infty}\binom{k}{j}\big\|(P_k)_{x^{k-j}}\big\|_{\u_j(E)} \\
 & = & \sum_{k=0}^{\infty} \|P_k\|_{\u_k(E)}\sum_{j=0}^{k}\binom{k}{j}R^j(C\|x\|)^{k-j}
\le p_{_{R+C\|x\|}}(f).
\end{eqnarray*}
Therefore $\tau_xf\in H_{b\u}(E)$ and $\tau_x$ is continuous.
\end{proof}

If $\u$ is $AB$-closed and coherent and $z\in E''$ we can define
$$
\begin{array}{cccc}
\tilde\tau_z : & H_{b\u}(E) & \to & H_{b\u}(E) \\
         &    f       & \mapsto & \tau_z(AB(f))\circ J_E.
\end{array}
$$
Remark \ref{remark AB coherente} ensures that  $\tilde\tau_z$ is a
(well-defined) continuous operator and $p_R(\tilde\tau_zf)\le
p_{\alpha(R+C\|z\|)}(f)$.

\begin{corollary}\label{phi tau z}
Let $\u$ be an $AB$-closed and multiplicative sequence and let $z\in
E''$. Then $\tilde\tau_z$ is a continuous operator. Consequently, if
$\varphi\in H_{b\u}(E)'$ then $\varphi\circ\tilde\tau_z\in
H_{b\u}(E)'$ and if $\varphi\in M_{b\u}(E)$ then
$\varphi\circ\tilde\tau_z\in M_{b\u}(E)$.
\end{corollary}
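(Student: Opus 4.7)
The plan is to read $\tilde\tau_z$ as the composition of three operators that have already been controlled, and then verify that each of these three operators preserves products so that multiplicativity of $\varphi$ is inherited.

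First I would write
\begin{equation*}
\tilde\tau_z = R_{J_E}\circ \tau_z \circ AB,
\end{equation*}
where $AB\colon H_{b\u}(E)\to H_{b\u}(E'')$ is the Aron--Berner extension (continuous with $p_R(AB(f))\le p_{\alpha R}(f)$ by the $AB$-closedness hypothesis, as recorded in Remark \ref{remark AB coherente}), $\tau_z\colon H_{b\u}(E'')\to H_{b\u}(E'')$ is the translation operator on $E''$ (continuous with $p_R(\tau_z g)\le p_{R+C\|z\|}(g)$ by Lemma \ref{tau_x} applied to the Banach space $E''$), and $R_{J_E}(g):=g\circ J_E$ takes $H_{b\u}(E'')$ into $H_{b\u}(E)$. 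The continuity of $R_{J_E}$ with $p_R(g\circ J_E)\le p_R(g)$ follows from the ideal property $\|Q\circ J_E\|_{\u_j(E)}\le\|Q\|_{\u_j(E'')}$ noted in Remark \ref{remark AB coherente}. Composing these three estimates yields $p_R(\tilde\tau_z f)\le p_{\alpha(R+C\|z\|)}(f)$, proving continuity of $\tilde\tau_z$.

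From continuity, the assertion $\varphi\circ\tilde\tau_z\in H_{b\u}(E)'$ for $\varphi\in H_{b\u}(E)'$ is immediate. For the spectrum statement I would argue that each of the three factors above is a (unital) algebra homomorphism, so their composition is. Specifically: $AB$ is multiplicative because $\u$ is multiplicative and $AB$ is an algebra homomorphism on $H_{b\u}(E)$ (this can be checked factor-by-factor on the Taylor series, using that for $P\in\u_k(E)$ and $Q\in\u_l(E)$ one has $AB(PQ)=AB(P)\cdot AB(Q)$, which is the standard symmetric-regularity-type identity valid because finite-type polynomials are weak-star dense in $\u_k(E'')$ and both sides agree on them); $\tau_z$ is obviously multiplicative; and $R_{J_E}$ is obviously multiplicative. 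Consequently $\tilde\tau_z(fg)=\tilde\tau_z(f)\tilde\tau_z(g)$, and since $\tilde\tau_z(1)=1$ we get $\varphi\circ\tilde\tau_z(1)=1\ne 0$, so $\varphi\circ\tilde\tau_z\in M_{b\u}(E)$.

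The only non-routine point is the multiplicativity of $AB$ on $H_{b\u}(E)$; the continuity half is just a bookkeeping composition of three seminorm estimates that were already proven in Lemma \ref{tau_x} and Remark \ref{remark AB coherente}. All the rest is direct verification.
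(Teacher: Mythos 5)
Your continuity argument is exactly the paper's: the paper disposes of that half by invoking Remark \ref{remark AB coherente} together with Lemma \ref{tau_x}, and your factorization $\tilde\tau_z=R_{J_E}\circ\tau_z\circ AB$ with the three seminorm estimates is just a spelled-out version of that, yielding the same bound $p_R(\tilde\tau_z f)\le p_{\alpha(R+C\|z\|)}(f)$. The extra content you supply is the verification that $\tilde\tau_z$ is an algebra homomorphism (which the paper leaves implicit), and there the reduction to the polynomial identity $AB(PQ)=AB(P)\cdot AB(Q)$, via the Taylor coefficients and the obvious multiplicativity of $\tau_z$ and $R_{J_E}$, is the right strategy.

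The gap is in your justification of that polynomial identity. You claim it holds ``because finite-type polynomials are weak-star dense in $\u_k(E'')$ and both sides agree on them''. No such density is among the hypotheses of the corollary, and it is simply false for several of the sequences the corollary is meant to cover (for $\u=\{\p^k_e\}_k$ or $\u=\{\p^k_I\}_k$ on a general Banach space, finite-type polynomials are not dense; density of finite-type polynomials is a \emph{separate} hypothesis that the paper invokes only in later results). Moreover, even granting some density, one would still need continuity of $P\mapsto AB(P)$ and of the product in the relevant topology to pass to the limit, so the argument as written does not close. The correct, hypothesis-free justification is the Davie--Gamelin polynomial-star approximation theorem \cite{DavGam89}: for each $z\in E''$ there is a net $(x_\lambda)\subset E$, $\|x_\lambda\|\le\|z\|$, with $P(x_\lambda)\to AB(P)(z)$ simultaneously for \emph{all} continuous polynomials $P$ on $E$; hence $AB(PQ)(z)=\lim_\lambda P(x_\lambda)Q(x_\lambda)=AB(P)(z)\,AB(Q)(z)$ for arbitrary $P\in\p^k(E)$, $Q\in\p^l(E)$, with no regularity or density assumptions. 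With that substitution (note the identity is one of ordinary polynomials on $E''$, so membership in the ideals plays no role in it), the rest of your argument --- absolute convergence of the Taylor series allowing term-by-term manipulation, $\tilde\tau_z(fg)=\tilde\tau_z(f)\tilde\tau_z(g)$, $\tilde\tau_z(1)=1$, and hence $\varphi\circ\tilde\tau_z\in M_{b\u}(E)$ --- goes through as you wrote it.
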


Note that $\pi(\varphi\circ\tilde\tau_z)(\gamma)=\varphi\circ\tilde\tau_z(\gamma)=\varphi(AB(\gamma)(z+J_E(\cdot))= \varphi(1)z(\gamma)+\varphi(\gamma)=z(\gamma)+\varphi(\gamma)$, and thus $\pi(\varphi\circ\tilde\tau_z)=z+\pi(\varphi)$.

A necessary condition to obtain the analytic structure of the spectrum of  $H_b(E)$ is that the space $E$ be symmetrically regular (i.e. the
Arens extensions of every symmetric multilinear form are
symmetric). In our case, to study the spectrum of $H_{b\u}(E)$, we
need that the Arens extensions of $\v P$ be symmetric for every $P$ in $\u_k(E)$ (and
for all $k$). This happens, of course, if $E$ is symmetrically regular, but also for arbitrary $E$ if $\u_k$ are good enough. So we define:

\begin{definition}
A sequence $\u$ is regular at $E$ if, for every
$k$ and every $P$ in $\u_k(E)$, we have that every Arens extension (that is, any  extension by $w^*$-continuity in each variable in some order) of  $\v P$ are
symmetric.
 We say that the sequence $\u$ is regular if it is regular at $E$ for every Banach space $E$.
\end{definition}

\begin{example}\label{AB simetrica}\rm
\begin{enumerate}
\item[(a)] Any sequence of ideals contained in the ideals of approximable polynomials is regular. In particular, any sequence of minimal ideals is regular.

\item[(b)] The sequences of integral \cite[Proposition 2.14]{CarLas04},
extendible \cite[Proposition 2.15]{CarLas04} and weakly continuous
\cite{AroHerVal83} multilinear forms are regular.

\item[(c)] If $\{\alpha_k\}_k$ is a sequence of projective symmetric tensor norms and $\u$ is a sequence of ideals associated to
$\{\alpha_k\}_k$, then $\u$ is regular. Indeed, $\bs \varepsilon_k/\le \alpha_k$ and thus $\alpha_k'\le\bs \varepsilon_k/'=\eta_k$. Then,
if we denote
$\beta_k=\alpha_k'$, since $\beta_k \leq \eta_k$, every $P\in
\left(\TS{k}_{\beta_k}E\right)'$ is extendible and, by (b), any Arens extension of $\v P$ is symmetric. This says that the sequence of maximal
ideals associated to $\{\alpha_k\}_k$ is regular and so is $\u$.

\item[(d)] Let $\{\alpha_k\}_k$ be a sequence of symmetric tensor norms and let $\u$ be the sequence
of maximal polynomial ideals associated to $\{\alpha_k\}_k$. If $\u$
is regular then is regular also any sequence of polynomial ideals
associated to $\{/ \alpha_k \bs \}_k$. Indeed, if we denote
$\beta_k=\alpha_k'$, for any $P\in
\left(\otimes_{\bs\beta_k/}^{k,s}E\right)'$ we have that
$Q=P\circ \otimes^kq\in
\left(\otimes_{\beta_k}^{k,s}\ell_1(B_E)\right)'=\u_k(\ell_1(B_E))$,
where $q$ is the metric projection
$\ell_1(B_E) \twoheadrightarrow E$.
We claim that any Arens extension of $\v P$ is symmetric. Indeed, let $\sigma$ be a permutation of $\{1,\dots,k\}$ and let $\widetilde{ P}$, $\widetilde{ Q}$ the Arens extensions of $\v P$, $\v Q$ defined by $w^*$-continuity in the order determined by $\sigma$.  For $j=1,\dots,k$, take $z_j\in E''$ and bounded nets $(x_{\lambda_j}^j)_{\lambda_j}\subset E$ such that $x_{\lambda_j}^j\overset{w^*}{\to}z_j$. Then
\begin{eqnarray*} \widetilde{ Q}(z_1,\dots,z_k) & = & \lim_{\lambda_{\sigma(1)}}\dots\lim_{\lambda_{\sigma(k)}}\v Q(x_{\lambda_1}^{1},\dots,x_{\lambda_k}^{k}) =\lim_{\lambda_{\sigma(1)}}\dots\lim_{\lambda_{\sigma(k)}} \v P(q(x_{\lambda_1}^{1}),\dots,(x_{\lambda_k}^{k}))\\ & = & \widetilde{ P}(q''(z_1),\dots,q''(z_k)).\end{eqnarray*} Since $q''$ is surjective and $\widetilde{ Q}$ is symmetric, we conclude that $\widetilde{ P}$ is symmetric.

\item[(e)] As a consequence of (c) and (d) we obtain that if $\u$ is a sequence of polynomial ideals
associated with any of the natural sequences (except for the case
$\alpha_k=\varepsilon_k$) then $\u$ is regular.

\item[(f)] If the sequences $\{\u_k^0\}_k$ and
$\{\u_k^1\}_k$ are regular then the interpolated sequence
$\{\u_k^{\theta}\}_k$ is also regular (because each $\u_k^{\theta}$
is contained in $\u_k^0+\u_k^1$).
\end{enumerate}
\end{example}

As in \cite{AroGalGarMae96} it can be proved that, if $\u$ is regular at $E$ then every evaluation at a point of $E^{iv}$ is in fact an evaluation at a point of $E''$.

The next two lemmas can be obtain just as in  \cite[Pages 428-430]{Din99}.
\begin{lemma}\label{tau z tau w=tau z+w}
Let $\u$ be an $AB$-closed coherent sequence which is regular at a
Banach space $E$.  Then
$\tilde\tau_z\circ\tilde\tau_w=\tilde\tau_{z+w}$ for every $z,w\in
E''$.
\end{lemma}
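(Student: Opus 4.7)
The plan is to reduce the identity $\tilde\tau_z\circ\tilde\tau_w=\tilde\tau_{z+w}$ to the single key formula
\[
AB(\tilde\tau_w f)(\zeta) = AB(f)(w+\zeta) \quad\text{for all } \zeta\in E'' \text{ and } f\in\hbu(E),
\]
because once this is granted, for $x\in E$ one computes directly
\[
(\tilde\tau_z\circ\tilde\tau_w)(f)(x) = AB(\tilde\tau_w f)(z+J_E(x)) = AB(f)(w+z+J_E(x)) = \tilde\tau_{z+w}(f)(x).
\]

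To establish the key formula I would first treat a single homogeneous $P\in\u_k(E)$ and then extend by the Taylor series. For $P$, let $\widetilde{P}$ denote the (unique, by regularity) Arens extension of $\v P$ to $E''$, so that $AB(P)(\zeta)=\widetilde{P}(\zeta,\dots,\zeta)$. The multinomial expansion gives
\[
\tilde\tau_w P(y) = AB(P)(w+J_E(y)) = \sum_{j=0}^k \binom{k}{j}\, \widetilde{P}(w^j,J_E(y)^{k-j}),
\]
so the $(k-j)$-homogeneous component $R_j$ of $\tilde\tau_w P$ has associated symmetric form $\v R_j(y_1,\dots,y_{k-j})=\binom{k}{j}\widetilde{P}(w^j,J_E(y_1),\dots,J_E(y_{k-j}))$, symmetric in the $y_i$ by the full symmetry of $\widetilde{P}$. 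The central computation is then $AB(R_j)(\zeta)=\binom{k}{j}\widetilde{P}(w^j,\zeta^{k-j})$: one recognizes the right hand side as the iterated $w^*$-limit of $\v R_j$ along nets in $E$ converging weak-star to $\zeta$, using the $w^*$-continuity of $\widetilde{P}$ in each of its last $k-j$ slots while the first $j$ remain fixed at $w$. Summing over $j$ and applying the multinomial formula for $\widetilde{P}((w+\zeta)^k)$ delivers the key formula for $P$. The extension to $f=\sum_k P_k\in\hbu(E)$ is then bookkeeping: coherence together with $AB$-closedness (via Remark~\ref{remark AB coherente} and the strategy of the proof of Lemma~\ref{tau_x}) yields absolute convergence in each seminorm $p_R$ of all the series involved, so a Fubini-type rearrangement by degree of homogeneity applies the polynomial case termwise.

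The main obstacle is precisely the commutation of weak-star limits inside the central computation. Since the Aron--Berner extension performs its iterated $w^*$-limits in a prescribed order, one must justify that fixing $j$ variables of $\v P$ at $w$ and then extending in the remaining $k-j$ variables yields the same result as first extending all $k$ variables of $\v P$ and then specializing $j$ of them to $w$. This is exactly the content of the regularity hypothesis at $E$: every iterated Arens extension of $\v P$ produces the same symmetric multilinear form $\widetilde{P}$, so the order in which the various nets are passed to their weak-star limits is immaterial. Once this permutation of limits is secured, the remaining steps are purely algebraic.
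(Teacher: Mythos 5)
Your argument is correct and is essentially the proof the paper has in mind: the paper delegates this lemma to Dineen's book (pp.\ 428--430), where the identity is proved exactly by expanding $\tilde\tau_w P$ into its homogeneous components $\binom{k}{j}\widetilde P(w^j,\cdot)$ and using the symmetry of all Arens extensions (the regularity hypothesis) to interchange the iterated weak-star limits, plus the coherence/$AB$-closedness estimates to pass from polynomials to Taylor series. Your closing paragraph correctly identifies that the ``$w^*$-continuity in the last $k-j$ slots'' is really the statement that all iterated Arens extensions coincide, which is precisely what regularity supplies.
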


\begin{lemma}
Let $\u$ be an $AB$-closed multiplicative sequence which is regular
at a Banach space $E$. For each  $\varphi\in M_{b\u}(E)$ and
$\varepsilon>0$ define
$V_{\varphi,\varepsilon}=\{\varphi\circ\tilde\tau_z:\, z\in E'',\,
\|z\|<\varepsilon\}$. Then
 $\big\{V_{\varphi,\varepsilon}:\, \varphi\in M_{b\u}(E),\,\varepsilon>0\big\}$ is the basis of a Hausdorff topology in $M_{b\u}(E)$.
\end{lemma}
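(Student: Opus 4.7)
The plan is to check the two standard axioms: that $\{V_{\varphi,\varepsilon}\}$ forms the base of a topology (i.e.\ covers $M_{b\u}(E)$ and is closed under intersections in the usual basis sense), and that the resulting topology is Hausdorff. The whole argument rests on the group-like identity $\tilde\tau_z\circ\tilde\tau_w=\tilde\tau_{z+w}$ of the previous lemma, together with the observation that $\tilde\tau_0=\mathrm{id}$, which follows directly from the definition since $AB(f)\circ J_E=f$.

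First I would note that $\varphi=\varphi\circ\tilde\tau_0\in V_{\varphi,\varepsilon}$ for every $\varepsilon>0$, so the sets cover. For the intersection axiom, given $\psi\in V_{\varphi,\varepsilon}$, write $\psi=\varphi\circ\tilde\tau_z$ with $\|z\|<\varepsilon$, set $\delta:=\varepsilon-\|z\|>0$, and use the composition rule: for any $w\in E''$ with $\|w\|<\delta$,
\[
\psi\circ\tilde\tau_w=\varphi\circ\tilde\tau_z\circ\tilde\tau_w=\varphi\circ\tilde\tau_{z+w},
\]
and $\|z+w\|<\varepsilon$, so $V_{\psi,\delta}\subseteq V_{\varphi,\varepsilon}$. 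Applying this to the intersection of two basic neighborhoods in the usual way yields the basis property.

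For the Hausdorff property, suppose $\varphi\neq\psi$ in $M_{b\u}(E)$ and split into two cases. If $\pi(\varphi)\neq\pi(\psi)$, let $r=\|\pi(\varphi)-\pi(\psi)\|>0$ and take $\varepsilon=r/2$; for any $\xi\in V_{\varphi,\varepsilon}\cap V_{\psi,\varepsilon}$ one would have $\pi(\xi)=\pi(\varphi)+z=\pi(\psi)+w$ with $\|z\|,\|w\|<r/2$ (using the identity $\pi(\varphi\circ\tilde\tau_z)=\pi(\varphi)+z$ recorded just before the lemma), forcing $\|\pi(\varphi)-\pi(\psi)\|<r$, a contradiction. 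If instead $\pi(\varphi)=\pi(\psi)$, I claim $V_{\varphi,\varepsilon}\cap V_{\psi,\varepsilon}=\emptyset$ for every $\varepsilon>0$: any common point gives $\varphi\circ\tilde\tau_z=\psi\circ\tilde\tau_w$, applying $\pi$ forces $z=w$, and then composing on the right with $\tilde\tau_{-z}$ together with $\tilde\tau_z\circ\tilde\tau_{-z}=\tilde\tau_0=\mathrm{id}$ yields $\varphi=\psi$, contradicting the assumption.

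The only subtle point—which is where I would expect to spend the most care—is the last step in the Hausdorff argument: the invertibility of $\tilde\tau_z$ as an operator on $H_{b\u}(E)$. This uses the composition lemma in a crucial way (not merely continuity of $\tilde\tau_z$), and it is here that regularity at $E$ enters, since without the symmetry of the Arens extensions of the associated multilinear forms one cannot guarantee $\tilde\tau_z\circ\tilde\tau_w=\tilde\tau_{z+w}$. Everything else is essentially formal manipulation of the basis axioms, modeled on the corresponding argument in \cite[pp.~428--430]{Din99}.
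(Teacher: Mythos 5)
Your proof is correct and follows essentially the same route as the paper, which simply invokes the standard argument of \cite[pp.~428--430]{Din99}: the basis and Hausdorff axioms are verified via the identities $\tilde\tau_0=\mathrm{id}$, $\tilde\tau_z\circ\tilde\tau_w=\tilde\tau_{z+w}$ (where regularity at $E$ is used) and $\pi(\varphi\circ\tilde\tau_z)=\pi(\varphi)+z$. Your closing remark correctly identifies the only nontrivial ingredient, namely the group law for $\tilde\tau_z$ from the preceding lemma.
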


\begin{proposition}\label{dominiodeRiemann}
Let $\u$ be an $AB$-closed multiplicative sequence which is regular
at a Banach space $E$. Then $(M_{b\u}(E),\pi)$ is a Riemann domain
over $E''$ and each connected component of $(M_{b\u}(E) ,\pi)$ is
homeomorphic to $E''$.
\end{proposition}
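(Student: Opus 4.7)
The plan is to use the basis $\{V_{\varphi,\varepsilon}\}$ constructed in the previous lemma together with the group-like identity $\tilde\tau_z\circ\tilde\tau_w=\tilde\tau_{z+w}$ and the formula $\pi(\varphi\circ\tilde\tau_z)=z+\pi(\varphi)$ already recorded in the text. These are exactly the tools needed to make $\pi$ into a local homeomorphism and to produce the homeomorphism of each connected component with $E''$.

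First I would check that $\pi$ is a local homeomorphism. Given $\varphi\in M_{b\u}(E)$ and $\varepsilon>0$, the restriction $\pi|_{V_{\varphi,\varepsilon}}$ sends $\varphi\circ\tilde\tau_z$ to $z+\pi(\varphi)$; this is a bijection of $V_{\varphi,\varepsilon}$ onto the open ball $B(\pi(\varphi),\varepsilon)\subset E''$. Continuity of $\pi$ on $V_{\varphi,\varepsilon}$ follows at once from this explicit formula (translations in $E''$ are continuous), and openness follows because the same formula shows that $\pi$ maps each basic open set onto an open ball. Hence $\pi|_{V_{\varphi,\varepsilon}}$ is a homeomorphism onto an open subset of $E''$, which is precisely what is needed for $(M_{b\u}(E),\pi)$ to be a Riemann domain over $E''$ (the Hausdorff property is already supplied by the previous lemma).

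Next, to identify the connected components I would fix $\varphi\in M_{b\u}(E)$ and consider
\[
\Phi_\varphi:E''\to M_{b\u}(E),\qquad \Phi_\varphi(z)=\varphi\circ\tilde\tau_z.
\]
Using Lemma~\ref{tau z tau w=tau z+w} one gets $\Phi_\varphi(z)\circ\tilde\tau_w=\Phi_\varphi(z+w)$, and therefore $V_{\Phi_\varphi(z),\varepsilon}=\Phi_\varphi(B(z,\varepsilon))$. This identity has three immediate consequences: $\Phi_\varphi$ is continuous (preimages of basic opens are open balls), $\Phi_\varphi$ is an open map (it sends open balls to basic opens), and $\Phi_\varphi$ is injective (because $\pi\circ\Phi_\varphi(z)=z+\pi(\varphi)$ is injective). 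Thus $\Phi_\varphi$ is a homeomorphism from $E''$ onto its image $\Phi_\varphi(E'')$, and the latter is an open, connected subset of $M_{b\u}(E)$ containing $\varphi$ (recall $\tilde\tau_0=\mathrm{id}$).

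Finally I would check that $\Phi_\varphi(E'')$ actually coincides with the connected component of $\varphi$. The relation $\psi_1\sim\psi_2\iff\psi_2=\psi_1\circ\tilde\tau_z$ for some $z\in E''$ is an equivalence relation (reflexivity from $\tilde\tau_0=\mathrm{id}$, symmetry from $\tilde\tau_z\circ\tilde\tau_{-z}=\tilde\tau_0$, transitivity from the addition formula), whose equivalence classes are precisely the sets $\Phi_\psi(E'')$. Each such class is open by the previous paragraph, so the complement of any class, being a union of the remaining classes, is also open. Hence the classes are clopen and connected, so they are the connected components, and the restriction of $\Phi_\varphi$ provides the desired homeomorphism with $E''$. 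The only mildly delicate step is the closedness of $\Phi_\varphi(E'')$, and the equivalence-class argument neatly bypasses it.
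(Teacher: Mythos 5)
Your proof is correct and follows essentially the same route as the paper: use the basic neighborhoods $V_{\varphi,\varepsilon}$ together with $\pi(\varphi\circ\tilde\tau_z)=\pi(\varphi)+z$ and Lemma~\ref{tau z tau w=tau z+w} to see that $\pi$ is a local homeomorphism and that $S(\varphi)=\{\varphi\circ\tilde\tau_z: z\in E''\}$ is homeomorphic to $E''$ and is the connected component of $\varphi$. The only difference is that you spell out, via the clopen partition into the sets $S(\psi)$, why $S(\varphi)$ is exactly the component, a point the paper states without elaboration.
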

\begin{proof}
With the topology defined in the above lemma, it is clear that for each $\varphi\in M_{b\u}(E)$ and $\varepsilon>0$, $\pi|_{V_{\varphi,\varepsilon}}$ is an homeomorphism onto $B_{E''}(\pi(\varphi),\varepsilon)$. Thus $\pi:M_{b\u}(E)\to E''$ is a local homeomorphism. Note that given $\varphi\in M_{b\u}(E)$, by Corollary \ref{phi tau z}, $\varphi\circ\tilde\tau_z$ is an homomorphism for each $z\in E''$. Moreover, since $\pi(\varphi\circ\tilde\tau_z)=\pi(\varphi)+z$ it follows that $\pi$ is an homeomorphism from $S(\varphi):=\{\varphi\circ\tilde\tau_z:\, z\in E''\}$ to $E''$ and thus $S(\varphi)$ is the connected component of $\varphi$ in $M_{b\u}(E)$.
\end{proof}

\begin{example}\rm
$(M_{b\u}(E),\pi)$ is a Riemann domain over $E''$ (and each
connected component is homeomorphic to $E''$) in the following
cases:

\begin{enumerate}
\item[(a)] $\u=\{\p^k_I\}_k$ or $\u=\{\p^k_e\}_k$ or, more
generally, $\u$ the sequence of maximal polynomial ideals associated
to any of the natural sequences $\{\alpha_k\}_k$ (except for
$\alpha_k=\epsilon_k$) and $E$ any Banach space.

\item[(b)] $\u=\{\p^k_w\}_k$ and $E$ any Banach space.

\item[(c)] $\u$ any multiplicative sequence of maximal
polynomial ideals and $E$ symmetrically regular.

\item[(d)] $\u=\{\u_k^{\theta}\}_k$, with $\{\u_k^0\}_k$ and
$\{\u_k^1\}_k$ any of the sequences of the examples (a) or (b) (or
(c) and $E$ symmetrically regular).

\end{enumerate}
\end{example}

Each function $f\in H_{b\u}(E)$ can be extended via its Gelfand transform $\tilde f$ to the spectrum $M_{b\u}(E)$, that is $\tilde f(\varphi)=\varphi(f)$. Now that we have proved that $M_{b\u}(E)$ is a Riemann domain, it is natural to ask if $\tilde f$ is analytic in $M_{b\u}(E)$. Moreover, one can wonder if $\tilde f$ preserve some of the properties of $f$ in terms of the ideals $\u$.
Also, given $\varphi\in H_{b\u}(E)'$ and $f\in H_{b\u}(E)$,
Corollary~\ref{phi tau z} allows us to define a function on $E''$
by $z\mapsto \varphi\circ\tilde\tau_z(f)$. We
will show in Theorem \ref{convolucion} that this function belongs to
$H_{b\u}(E'')$. This will allow us to conclude that the restriction of $\tilde f$ to each connected component of $M_{b\u}(E)$ is of $\u$-holomorphic (Theorem~\ref{extension al espectro} below).

First, note that for $\varphi\in\hbu(E)'$, there are constants $c,r>0$ such that
$|\varphi(g)|\le cp_{_r}(g)$, for every $g\in\hbu(E)$. In
particular, $|\varphi(P)|\le cp_{_r}(P)= cr^k\|P\|_{\u_k(E)}$. As a consequence, we have
\begin{equation}\label{norma de phi}\|\varphi_{|_{\u_k(E)}}\|_{\u_k(E)'}\le cr^k,\end{equation} for every $k\ge 1$.

\medskip

When a sequence of polynomial ideals is defined in both the scalar and vector valued case, one may consider the following property: ``for every $P\in\u_k(E)$, the mapping $x\mapsto P_{x^l}$ belongs to the space of vector-valued polynomials $\u_l(E;\u_{k-l}(E))$''. This would mean that the differentials of a polynomial in $\u$ are also  polynomials in $\u$. Since we are dealing with scalar-valued polynomial ideals, we can consider a similar property, which could be read as ``the differentials of a polynomial in $\u$ are weakly in $\u$''. More precisely, we have:

\begin{definition}
Let $\u$ be a coherent sequence of polynomial ideals and let $E$ be
a Banach space. We say that $\u$ is weakly differentiable if there
exists a constant $K$ such that, for  $l < k$,  $P\in\u_k(E)$ and $\varphi\in\u_{k-l}(E)'$,  the mapping
$x\mapsto\varphi(P_{x^l})$ belongs to $\u_l(E)$ and
$$\Big\|x\mapsto\varphi\big(P_{x^l}\big)\Big\|_{\u_l(E)} \le K^k\|\varphi\|_{\u_{k-l}(E)'}\|P\|_{\u_k(E)}.$$
\end{definition}

Since $\frac{d^{k-l}P}{(k-l)!}(x)=\binom{k}{l}P_{x^l}$, the previous condition is equivalent to say that $\varphi\circ \frac{d^{k-l}P}{(k-l)!}$ belongs to $\u_l(E)$ and
$$
\Big\|\varphi\circ \frac{d^{k-l}P}{(k-l)!}\Big\|_{\u_l(E)} \le \binom{k}{l}K^k\|\varphi\|_{\u_{k-l}(E)'}\|P\|_{\u_k(E)}.
$$ This is what we mean by saying that the differentials are weakly in $\u$ and what suggested our terminology.

As the following proposition shows, there is some kind of duality between the properties of multiplicativity and weakly differentiability of a sequence.
\begin{proposition}\label{u w-diff implica u* multip}
\begin{enumerate}
\item[($i$)] Let $\u=\{\u_k\}_k$ be a weakly differentiable sequence. Then the sequence of adjoint ideals $\{\u_k^{*}\}_k$ is multiplicative.

\item[($ii$)] Let $\u=\{\u_k\}_k$ be a multiplicative sequence. Then the sequence of adjoint ideals $\{\u_k^{*}\}_k$ is weakly differentiable.
\end{enumerate}
In both cases the constants of multiplicativity and weakly differentiability are the same.
\end{proposition}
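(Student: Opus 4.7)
The plan rests on exploiting trace duality between $\u_k$ and $\u_k^*$ together with an integration-by-parts identity that converts a polynomial product into a partial contraction. On any finite-dimensional subspace $M\subset E$, the adjoint ideal satisfies $\u_k^*(M)=\u_k(M)'$ isometrically via a natural trace pairing $\langle\cdot,\cdot\rangle$, and the norm on general $E$ is recovered as the supremum of these finite-dimensional norms over $M\subset E$; in particular, every $P\in\u_k^*(E)$ induces a continuous functional on $\u_k(E)$ of norm at most $\|P\|_{\u_k^*(E)}$.

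The key combinatorial identity, verified by expanding in a basis of a finite-dimensional subspace and using symmetry of the associated multilinear forms, reads
\begin{equation*}
\langle PQ,R\rangle \;=\; \langle Q,\Pi_{R,P}\rangle, \qquad \Pi_{R,P}(x):=\langle P,R_{x^l}\rangle,
\end{equation*}
for $P\in\p^k$, $Q\in\p^l$, $R\in\p^{k+l}$. A dual version furnishes, for $P\in\p^k$, $R\in\p^l$ and $S\in\p^{k-l}$, a contraction $\Lambda_{R,P}\in\p^{k-l}$ such that $\langle P,RS\rangle=\langle\Lambda_{R,P},S\rangle$.

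For $(i)$, assume $\u$ is weakly differentiable with constant $K$. Given $P\in\u_k^*(E)$, $Q\in\u_l^*(E)$ and $R\in\u_{k+l}(M)$ for a finite-dimensional $M\subset E$, the identity combined with weak differentiability of $\u$ (applied with $P$ in the role of $\varphi\in\u_k(E)'$, total degree $k+l$, step $l$) gives
\begin{equation*}
|\langle PQ,R\rangle| \;=\; |\langle Q,\Pi_{R,P}\rangle| \;\le\; \|Q\|_{\u_l^*(E)}\,\|\Pi_{R,P}\|_{\u_l(E)} \;\le\; K^{k+l}\|P\|_{\u_k^*(E)}\|Q\|_{\u_l^*(E)}\|R\|_{\u_{k+l}(E)}.
\end{equation*}
Taking suprema over $R$ and over $M$ yields $\|PQ\|_{\u_{k+l}^*(E)}\le K^{k+l}\|P\|_{\u_k^*(E)}\|Q\|_{\u_l^*(E)}$, i.e.\ $\u^*$ is multiplicative with the same constant $K$.

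For $(ii)$, assume $\u$ is multiplicative with constant $M$. Given $P\in\u_k^*(E)$ and $\varphi\in\u_{k-l}^*(E)'$, set $\psi(x):=\varphi(P_{x^l})\in\p^l(E)$. For $R\in\u_l(M)$ on a finite-dimensional subspace, the dual identity gives $\langle\psi,R\rangle=\varphi(\Lambda_{R,P})$, so we bound $\Lambda_{R,P}$ in $\u_{k-l}^*$ by pairing with $S\in\u_{k-l}(M)$ and invoking multiplicativity of $\u$:
\begin{equation*}
|\langle\Lambda_{R,P},S\rangle| \;=\; |\langle P,RS\rangle| \;\le\; \|P\|_{\u_k^*(E)}\|RS\|_{\u_k(E)} \;\le\; M^{k}\|P\|_{\u_k^*(E)}\|R\|_{\u_l(E)}\|S\|_{\u_{k-l}(E)}.
\end{equation*}
Hence $\|\Lambda_{R,P}\|_{\u_{k-l}^*(E)}\le M^{k}\|P\|_{\u_k^*(E)}\|R\|_{\u_l(E)}$, so $|\langle\psi,R\rangle|\le\|\varphi\|\,M^{k}\|P\|_{\u_k^*(E)}\|R\|_{\u_l(E)}$, which yields $\|\psi\|_{\u_l^*(E)}\le M^{k}\|\varphi\|\|P\|_{\u_k^*(E)}$: weak differentiability of $\u^*$ with the same constant $M$. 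The main obstacle is making the trace pairings and contraction identities precise in infinite dimensions; this is resolved by the systematic reduction to finite-dimensional subspaces afforded by the definition of the adjoint-ideal norm, together with coherence of $\u^*$ (inherited from $\u$) which guarantees that $P_{x^l}$ genuinely lies in $\u_{k-l}^*(E)$.
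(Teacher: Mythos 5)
Your overall strategy coincides with the paper's: reduce to trace duality on finite-dimensional subspaces, use the flip identity $\langle PQ,\Psi\rangle=\langle Q,\;x\mapsto\langle P,\Psi_{x^l}\rangle\rangle$ together with weak differentiability for ($i$), and a contraction identity together with multiplicativity for ($ii$). However, the duality on which all your displayed estimates rest is stated incorrectly, and this is a genuine gap. For finite-dimensional $M$ one has $\u_k^*(M)=\u_k(M')'$, the pairing being against polynomials on $M'$, i.e.\ against symmetric tensors $\sum_j x_j^k$ with $x_j\in M$, via $\langle P,\sum_j x_j^k\rangle=\sum_j P(x_j)$; it is \emph{not} $\u_k(M)'$. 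Consequently there is no canonical pairing between $\u_k^*(E)$ and $\u_k(E)$, and your claim that every $P\in\u_k^*(E)$ induces a functional on $\u_k(E)$ of norm at most $\|P\|_{\u_k^*(E)}$ is not available: for instance, for $\u_k=\p^k$ one has $\u_k^*=\p^k_I$, and an integral polynomial is a functional on $(\otimes^{k,s}E,\varepsilon_k)$ (finite-type polynomials on $E'$), not on the space of all continuous polynomials on $E$. This breaks the steps as written: in ($i$) you apply weak differentiability ``with $P$ in the role of $\varphi\in\u_k(E)'$'' to $R\in\u_{k+l}(M)$, and in ($ii$) you use $|\langle P,RS\rangle|\le\|P\|_{\u_k^*(E)}\|RS\|_{\u_k(E)}$ and compute $\|\psi\|_{\u_l^*(E)}$ by testing against the unit ball of $\u_l(E)$; none of these pairings or norms are the ones that define the adjoint ideal.

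The repair is a systematic replacement of $M$ by $M'$: test $PQ$ (resp.\ $\psi$) against $\Psi=\sum_j x_j^{k+l}\in\u_{k+l}(M')$ (resp.\ against tensors $s=\sum_j y_j^l$ normed in $\u_l(M')$, equivalently by the associated tensor norm $\alpha_l$), and apply weak differentiability (resp.\ multiplicativity, or its tensor-norm reformulation in Proposition~\ref{maximini}) on the Banach space $M'$, where $P$ genuinely acts as an element of $\u_k(M')'$ with norm at most $\|P\|_{\u_k^*(E)}$. With this correction, your part ($i$) becomes exactly the paper's proof, including the constant $K^{k+l}$. Your part ($ii$) also goes through and is a mild variant of the paper's: you first bound the contraction $\sum_j P_{y_j^l}$ in $\u_{k-l}^*(E)$ and then apply $\varphi$, whereas the paper invokes Goldstine's theorem to replace $\varphi\in\u_{k-l}^*(E)'$ by a tensor $t$ with $\alpha_{k-l}(t)\le 1$; both routes ultimately rest on the inequality $\alpha_k(\sigma(s\otimes t))\le M^k\alpha_l(s)\alpha_{k-l}(t)$ furnished by multiplicativity. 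As written, though, the argument does not stand, because the key functional-analytic identification it relies on is false.
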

\begin{proof}
($i$) From \cite[Proposition 5.1]{CarDimMur09} we know that $\{\u_k^{*}\}_k$ is coherent. By Remark \ref{analogously}, it suffices to check that $\|PQ\|_{\u^{*}_{k+l}(M)}\le K^{k+l}\|P\|_{\u^{*}_{k}(M)}\|Q\|_{\u^{*}_{l}(M)}$ for any finite dimensional Banach space $M$. Since $M$ is finite dimensional, $\u_k^*(M)$ is just $\u_k(M')'$. Take
$P\in\u_{k}^*(M)=\u_{k}(M')'$ and $Q\in\u_{l}^*(M)=\u_{l}(M')'$. For $\Psi=\sum_j x_j^{k+l}\in\u_{k+l}(M')$, we have $$\langle PQ,\Psi\rangle =\sum_j P(x_j)Q(x_j)=\langle Q,\sum_j\langle P,x_j^k\rangle x_j^l\rangle =\langle Q,\gamma\mapsto\sum_j\langle P,x_j^k\rangle x_j(\gamma)^l\rangle =\langle Q,\gamma\mapsto P(\Psi_{\gamma^l})\rangle.$$ Thus, since $\{\u_k\}_k$ is weakly differentiable, $$|\langle PQ,\Psi\rangle| \le \|Q\|_{\u_{l}^*(M)}\ \|\gamma\mapsto P(\Psi_{\gamma^l})\|_{\u_{l}(M)}\le \|Q\|_{\u_{l}^*(M)}K^{k+l}\|P\|_{\u_{k}^*(M)}\|\Psi\|_{\u_{k+l}(M')},$$ which implies that $\|PQ\|_{\u_{k+l}^*(M)}\le K^{k+l}\|P\|_{\u_{k}^*(M)}\|Q\|_{\u_{l}^*(M)}$.

($ii$) For each $k$, let $\alpha_k$ be the finitely generated symmetric tensor norm associated to $\u_k$, so that for every $E$, $\u_k^*(E)=\Big(\bigotimes_{\alpha_k}^{k,s}E\Big)'$. By Proposition \ref{maximini} and Remark \ref{analogously} ($a$), the multiplicativity of $\u$ with constant $K$ implies that $\alpha_k(\sigma(s\otimes t))\le K^k\alpha_l(s)\alpha_{k-l}(t)$, for every $s\in\bigotimes_{\alpha_l}^{l,s}E$ and $t\in\bigotimes_{\alpha_{k-l}}^{k-l,s}E$.

Take $P\in\u_k^*(E)$ and $\varphi\in\u_{k-l}^*(E)'$, with $\|\varphi\|=1$. Define $Q(x)=\varphi(P_{x^l})$. Note that $Q$ is a well defined $l$-homogeneous polynomial since the sequence of adjoint ideals of a coherent sequence is again coherent \cite[Proposition 5.1]{CarDimMur09}. We have to prove that $Q$ belongs to $\u_l^*(E)=\Big(\bigotimes_{\alpha_l}^{l,s}E\Big)'$ and that $\|Q\|_{\u_l^*(E)}\le K^k\|P\|_{\u_k^*(E)}$. Take $s=\sum_j y_j^l\in\bigotimes_{\alpha_l}^{l,s}E$ and $\varepsilon>0$. Since $\varphi\in\Big(\bigotimes_{\alpha_{k-l}}^{k-l,s}E\Big)''$, by Goldstine theorem there is some $t=\sum_i x_i^{k-l}\in\bigotimes_{\alpha_{k-l}}^{k-l,s}E$, with $\alpha_{k-l}(t)\le1$ such that $|\langle Q,s\rangle|=|\sum_j \varphi(P_{y_j^l})|\le|\langle\sum_j P_{y_j^l},t\rangle|+\varepsilon$.
Thus, \begin{eqnarray*}|\langle Q,s\rangle| & \le & |\langle\sum_j P_{y_j^l},\sum_i x_i^{k-l}\rangle|+\varepsilon=\big|\sum_{i,j}\v P(x_i^{k-l},y_j^{l})\big|+\varepsilon=\big|\langle P,\sigma\big(\sum_{i,j}x_i^{k-l}\otimes y_j^{l})\rangle\big|+\varepsilon \\ & \le & \|P\|_{\u_k^*(E)}\alpha_k(\sigma(s\otimes t))+\varepsilon   \le  K^k\|P\|_{\u_k^*(E)}\alpha_l(s)\alpha_{k-l}(t)+\varepsilon.\end{eqnarray*} Since this is true for arbitrarily small $\varepsilon$ and $\alpha_{k-l}(t)\le1$, we conclude that
$\|Q\|_{\u_l^*(E)}\le K^k\|P\|_{\u_k^*(E)}$.
\end{proof}
Next corollary to Proposition~\ref{u w-diff implica u* multip}
follows, for maximal ideals, from the equality $\u_k^{**}=\u_k$. For
minimal ideals is a consequence of Example~\ref{ejemplos wd} (f) below.
\begin{corollary}
Let $\{\u_k\}_k$ be sequence of maximal polynomial ideals or minimal polynomial ideals. Then $\{\u_k\}_k$ is weakly differentiable (multiplicative) if and only if $\{\u_k^{*}\}_k$ is multiplicative (weakly differentiable).
\end{corollary}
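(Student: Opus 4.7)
\begin{pf}[Proof proposal]
The plan is to reduce everything to Proposition~\ref{u w-diff implica u* multip} by exploiting the behaviour of the adjoint operation on maximal and minimal ideals.

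First I would handle the maximal case. Suppose each $\u_k$ is a maximal polynomial ideal, so that $\u_k=\u_k^{**}$. The two forward implications are the content of Proposition~\ref{u w-diff implica u* multip}. For the two converses I would apply the same proposition to the sequence $\{\u_k^{*}\}_k$ in place of $\{\u_k\}_k$: if $\{\u_k^{*}\}_k$ is multiplicative, then part~(ii) of the proposition gives that $\{\u_k^{**}\}_k=\{\u_k\}_k$ is weakly differentiable, and symmetrically, if $\{\u_k^{*}\}_k$ is weakly differentiable, then part~(i) yields that $\{\u_k^{**}\}_k=\{\u_k\}_k$ is multiplicative. Note that the constants also transfer, since Proposition~\ref{u w-diff implica u* multip} preserves them.

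For the minimal case I would use the fact that the adjoint ideal of a minimal ideal coincides with the adjoint of its maximal hull, so $\u_k^{*}=(\u_k^{min})^{*}=(\u_k^{max})^{*}$ is maximal. Hence the maximal case of the corollary applies to $\{\u_k^{*}\}_k$: it is multiplicative (resp. weakly differentiable) iff $\{\u_k^{**}\}_k=\{\u_k^{max}\}_k$ is weakly differentiable (resp. multiplicative). Finally, invoking Example~\ref{ejemplos wd}(f) (for weakly differentiability) and Corollary~\ref{min y max} (for multiplicativity), the corresponding property for $\u_k^{max}$ is equivalent to the same property for $\u_k^{min}=\u_k$, which closes the argument.

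The only mildly delicate point is keeping track of which direction of Proposition~\ref{u w-diff implica u* multip} one applies and making sure the identifications $\u_k^{**}=\u_k$ (maximal case) and $\u_k^{*}=(\u_k^{max})^{*}$ (minimal case) are used correctly; beyond that the proof is essentially a formal composition of Proposition~\ref{u w-diff implica u* multip} with the stability results for maximal and minimal hulls already established in the paper.
\end{pf}
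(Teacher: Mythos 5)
Your proposal is correct and is essentially the paper's own argument: the paper obtains the corollary precisely by combining Proposition~\ref{u w-diff implica u* multip} with the identity $\u_k^{**}=\u_k$ in the maximal case, and by reducing the minimal case to the maximal hull via $\u_k^{*}=(\u_k^{max})^{*}$ together with Example~\ref{ejemplos wd}(f) (and Corollary~\ref{min y max} for multiplicativity). One small caveat: Example~\ref{ejemplos wd}(f) only gives that weak differentiability passes from $\{\u_k^{max}\}_k$ to its minimal hull $\{\u_k^{min}\}_k$, not the equivalence you assert; this is harmless, since the implication you would otherwise be missing, namely ``$\{\u_k^{min}\}_k$ weakly differentiable $\Rightarrow$ $\{\u_k^{*}\}_k$ multiplicative'', is a direct application of Proposition~\ref{u w-diff implica u* multip}($i$) and requires no transfer between the minimal and maximal hulls.
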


If $E$ is a Banach space and $\u$ is a  weakly differentiable
coherent sequence which is $AB$-closed (with constant $\alpha$), it
easily follows that the mapping $E''\ni
z\mapsto\varphi(AB(P)_{z^l}\circ J_E)$ belongs to $\u_l(E'')$ and
\begin{equation}\label{condition}\Big\|z\mapsto\varphi(AB(P)_{z^l}\circ J_E)\Big\|_{\u_l(E'')}\le
\alpha^k K^k\|\varphi\|_{\u_{k-l}(E)'}\|P\|_{\u_k(E)}.
\end{equation}

\begin{theorem}\label{convolucion}
Let $\u$ be an $AB$-closed weakly differentiable coherent sequence.
For each $\varphi\in (H_{b\u}(E))'$, the following operator is well
defined and continuous:
$$
\begin{array}{cccl}
\tilde T_\varphi: & \hbu(E) & \to      & \hbu(E'') \\
           &    f    & \mapsto  & \left(z\, \mapsto \varphi\circ\tilde\tau_z(f)\right)
\end{array}
$$
\end{theorem}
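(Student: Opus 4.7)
My plan is to expand $\tilde\tau_z(f)$ by the symmetric multinomial formula and then apply $\varphi$ term by term, regrouping by the degree of homogeneity in $z$. The absolute convergence estimates from coherence, $AB$-closedness and weak differentiability will both justify all rearrangements and provide the desired continuity bound.

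Writing $f=\sum_{k\ge 0}P_k$ with $P_k\in\u_k(E)$, the identity $\tilde\tau_z(f)(x)=AB(f)(z+J_E(x))$ together with the multinomial expansion of $\v{AB(P_k)}((z+J_E(x))^k)$ gives
$$\tilde\tau_z(f) \;=\; \sum_{k=0}^{\infty}\sum_{l=0}^{k}\binom{k}{l}\,AB(P_k)_{z^l}\circ J_E.$$
By Remark~\ref{remark AB coherente}, $\|AB(P_k)_{z^l}\circ J_E\|_{\u_{k-l}(E)}\le(C\|z\|)^l\alpha^k\|P_k\|_{\u_k(E)}$, so the double series converges absolutely in $\hbu(E)$. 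Continuity of $\varphi$ then lets me apply it termwise. Using \eqref{norma de phi} to write $\|\varphi|_{\u_{k-l}(E)}\|_{\u_{k-l}(E)'}\le cr^{k-l}$ for suitable constants $c,r>0$ depending on $\varphi$, I obtain a pointwise expansion
$$\varphi(\tilde\tau_z f) \;=\; \sum_{l=0}^{\infty}Q_l(z), \qquad Q_l(z):=\sum_{k=l}^{\infty}\binom{k}{l}\varphi\bigl(AB(P_k)_{z^l}\circ J_E\bigr).$$

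Next I would verify that each $Q_l$ lies in $\u_l(E'')$. For each fixed $k\ge l$, the functional $\tilde\varphi(R):=\varphi(R\circ J_E)$ extends $\varphi|_{\u_{k-l}(E)}$ to $\u_{k-l}(E'')$ with $\|\tilde\varphi\|\le\|\varphi|_{\u_{k-l}(E)}\|$ (by the ideal inequality $\|R\circ J_E\|_{\u_{k-l}(E)}\le\|R\|_{\u_{k-l}(E'')}$). Applying weak differentiability to $AB(P_k)\in\u_k(E'')$ and $\tilde\varphi$, and combining with the $AB$-closed bound $\|AB(P_k)\|_{\u_k(E'')}\le\alpha^k\|P_k\|_{\u_k(E)}$, I get exactly inequality~\eqref{condition}:
$$\bigl\|z\mapsto\varphi(AB(P_k)_{z^l}\circ J_E)\bigr\|_{\u_l(E'')}\le\alpha^kK^k cr^{k-l}\|P_k\|_{\u_k(E)}.$$
Since $\|P_k\|_{\u_k(E)}^{1/k}\to 0$, the series defining $Q_l$ converges absolutely in the Banach ideal $\u_l(E'')$, whence $Q_l\in\u_l(E'')$ with
$$\|Q_l\|_{\u_l(E'')}\le c\sum_{k=l}^{\infty}\binom{k}{l}\alpha^k K^k r^{k-l}\|P_k\|_{\u_k(E)}.$$

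Finally, I would sum the $l$-th seminorm contributions, swap the order of summation, and collapse the inner sum via the binomial theorem:
$$p_R(\tilde T_\varphi f)=\sum_{l=0}^{\infty}R^l\|Q_l\|_{\u_l(E'')}\le c\sum_{k=0}^{\infty}\alpha^k K^k\|P_k\|_{\u_k(E)}\sum_{l=0}^{k}\binom{k}{l}R^l r^{k-l} = c\,p_{\alpha K(R+r)}(f).$$
Because $f\in\hbu(E)$, the right-hand side is finite for every $R>0$. This simultaneously shows that $\tilde T_\varphi f\in\hbu(E'')$ (the $Q_l$ having the correct growth to be Taylor coefficients) and that $\tilde T_\varphi$ is continuous, with explicit seminorm estimate $p_R(\tilde T_\varphi f)\le c\,p_{\alpha K(R+r)}(f)$.

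The only genuinely delicate point is the bookkeeping: one must justify the interchange $\sum_{k,l}=\sum_l\sum_{k\ge l}$ and identify $Q_l$ with the true $l$-th Taylor coefficient of the entire function $\tilde T_\varphi f$ at $0$. Both follow from uniqueness of monomial expansions together with the absolute convergence estimates above, much as in the proof of Lemma~\ref{tau_x}; no ingredient beyond coherence, $AB$-closedness and the weakly differentiable hypothesis is required.
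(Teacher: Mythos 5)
Your proposal is correct and follows essentially the same route as the paper: expand $\varphi\circ\tilde\tau_z(f)$ as the double series $\sum_k\sum_{l\le k}\binom{k}{l}\varphi(AB(P_k)_{z^l}\circ J_E)$, regroup into the $l$-homogeneous pieces $Q_l$, bound them via inequality~\eqref{condition} (which you rederive, correctly, from weak differentiability applied to $AB(P_k)$ and the functional $R\mapsto\varphi(R\circ J_E)$), and sum to get $p_R(\tilde T_\varphi f)\le c\,p_{\alpha K(R+r)}(f)$. This is exactly the paper's argument, with the derivation of \eqref{condition} spelled out in slightly more detail.
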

\begin{proof}
Take $f=\sum_{k=0}^{\infty}P_k\in H_{b\u}(E)$ and $z\in E''$. Then $\varphi\circ\tilde\tau_z(f) =\sum_{k=0}^\infty\sum_{j=0}^k\binom{k}{j}\varphi\big(AB(P_k)_{z^j}\circ J_E\big)= \sum_{j=0}^\infty\sum_{k=j}^\infty\binom{k}{j}\varphi\big(AB(P_k)_{z^j}\circ J_E\big)$ since using Remark \ref{remark AB coherente} and inequality (\ref{norma de phi}) it is easy to see that this series is absolutely convergent.

Let
$Q_l(z)=\sum_{k=l}^\infty\binom{k}{l}\varphi\big(AB(P_k)_{z^l}\circ
J_E\big)$. Then $\varphi\circ\tilde\tau_z(f)=\sum_{l=0}^\infty
Q_l(z)$. We will show that $Q_l$ belongs to $\u_l(E'')$ and that
$\sum_{l=0}^\infty Q_l$ is in $H_{b\u}(E'')$. To prove this it
suffices to show that  the series
$\sum_{k=l}^\infty\binom{k}{l}\Big\|z\mapsto\varphi\big(AB(P_k)_{z^l}\circ
J_E\big)\Big\|_{\u_l(E'')}$ converges  and that for every $R>0$, the
series $\sum_{l=0}^\infty
R^l\Big\|\sum_{k=l}^\infty\binom{k}{l}z\mapsto\varphi\big(AB(P_k)_{z^l}\circ
J_E\big)\Big\|_{\u_l(E'')}$ also converges. By inequality
(\ref{condition}) we have
\begin{eqnarray*}
\sum_{l=0}^\infty R^l\Big\|\sum_{k=l}^\infty\binom{k}{l}z\mapsto\varphi\big(AB(P_k)_{z^l}\circ J_E\big)\Big\|_{\u_l(E'')} & \le & \sum_{l=0}^\infty R^l\sum_{k=l}^\infty\binom{k}{l}\Big\|z\mapsto\varphi\big(AB(P_k)_{z^l}\circ J_E\big)\Big\|_{\u_l(E'')} \\
 & \le & \sum_{l=0}^\infty R^l\sum_{k=l}^\infty\binom{k}{l} \alpha^kK^k\|\varphi_{|_{\u_{k-l}(E)}}\|_{\u_{k-l}(E)'}\|P_k\|_{\u_k(E)} \\
 & \le & c\sum_{k=0}^\infty \alpha^k\|P_k\|_{\u_k(E)}K^k\sum_{l=0}^k\binom{k}{l}R^lr^{k-l} \\
 & = & cp_{_{\alpha K(R+r)}}(f),
\end{eqnarray*}
where in the last inequality we have used (\ref{norma de phi}) and
changed the order of summation. Therefore $\tilde T_\varphi(f)$
belongs to $H_{b\u}(E'')$ and $p_{_R}(\tilde T_\varphi(f))\le
cp_{_{\alpha K(R+r)}}(f)$, that is, $\tilde T_\varphi\in\mathcal
L(H_{b\u}(E),H_{b\u}(E''))$.
\end{proof}

With a similar proof to the above result one can prove:
\begin{corollary}
Let $\u$ be a weakly differentiable multiplicative sequence. For
$\varphi\in H_{b\u}(E)'$ and $f\in H_{b\u}(E)$, we define $\varphi *
f(x)=\varphi\circ\tau_x(f)$. Then we have $\varphi * f\in
H_{b\u}(E)$ and the application
$$
\begin{array}{cccl}
T_\varphi: & H_{b\u}(E) & \to      & H_{b\u}(E) \\
           &    f    & \mapsto  & \varphi * f
\end{array}
$$
is a continuous linear operator.

Moreover, if $\psi\in M_{b\u}(E)$ we can define $\varphi*\psi\in H_{b\u}(E)'$ by $\varphi*\psi(f)=\psi\left(\varphi*f\right)$, and the application
$$
\begin{array}{cccl}
M_\varphi: & H_{b\u}(E)' & \to      & H_{b\u}(E)' \\
           &    \psi    & \mapsto  & \psi*\varphi
\end{array}
$$
is continuous.
\end{corollary}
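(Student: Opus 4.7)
The plan is to mimic the proof of Theorem~\ref{convolucion} verbatim, replacing the Aron--Berner twisted translation $\tilde\tau_z$ by the genuine translation $\tau_x$ with $x\in E$. Because we never leave $E$, the $AB$-closedness and regularity hypotheses drop out, and the multiplicativity of $\u$ (which gives us the algebra $\hbu(E)$ on which $\varphi$ acts) together with weak differentiability suffices.

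For the first assertion I would take $f=\sum_{k=0}^{\infty}P_k\in\hbu(E)$ and, as in Lemma~\ref{tau_x}, write
\[
\tau_x f \;=\; \sum_{k=0}^{\infty}\sum_{j=0}^{k}\binom{k}{j}(P_k)_{x^{k-j}}.
\]
Applying $\varphi$ and interchanging the order of summation---legitimate by the absolute convergence already exploited in Lemma~\ref{tau_x}, combined with the estimate $|\varphi(P)|\le cr^k\|P\|_{\u_k(E)}$ from (\ref{norma de phi})---one obtains
\[
\varphi*f(x)\;=\;\sum_{l=0}^{\infty} Q_l(x), \qquad Q_l(x)\;:=\;\sum_{k=l}^{\infty}\binom{k}{l}\varphi\bigl((P_k)_{x^l}\bigr).
\]
Weak differentiability guarantees that each $x\mapsto\varphi((P_k)_{x^l})$ lies in $\u_l(E)$ with norm at most $K^k\|\varphi|_{\u_{k-l}(E)}\|_{\u_{k-l}(E)'}\|P_k\|_{\u_k(E)}$, so $Q_l\in\u_l(E)$. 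Using (\ref{norma de phi}) once more and the binomial theorem to collapse $\sum_l\binom{k}{l}R^l r^{k-l}=(R+r)^k$, this yields
\[
p_R(\varphi*f)\;\le\;c\sum_{k=0}^{\infty}K^k\|P_k\|_{\u_k(E)}(R+r)^k\;=\;c\,p_{K(R+r)}(f),
\]
which simultaneously shows $\varphi*f\in\hbu(E)$ and the continuity of $T_\varphi$.

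For the second assertion I would deduce continuity of $M_\varphi$ directly from the first part, by duality. Writing $(\psi*\varphi)(f)=\varphi(\psi*f)$ and applying the first part to $\psi$ in place of $\varphi$ gives
\[
|(\psi*\varphi)(f)|\;\le\;c_\varphi\,p_{r_\varphi}(\psi*f)\;\le\;c_\varphi c_\psi\,p_{K(r_\varphi+r_\psi)}(f),
\]
where $c_\varphi,r_\varphi$ and $c_\psi,r_\psi$ are the continuity constants of $\varphi$ and $\psi$ respectively. On any strongly bounded $B\subset\hbu(E)'$ the constants $c_\psi,r_\psi$ can be chosen uniformly (equicontinuity on bounded sets is automatic in the strong dual of a Fr\'echet space), so $M_\varphi(B)$ is equicontinuous in $\hbu(E)'$; this is precisely continuity of $M_\varphi$ in the strong dual topology.

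The only real work is keeping the double series manipulations in the first part under control; weak differentiability is precisely the tool that makes the coefficientwise bound summable uniformly in $R$ and $x$, so the binomial collapse that drove the proof of Theorem~\ref{convolucion} carries over unchanged.
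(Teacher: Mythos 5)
Your treatment of the first assertion is correct and is exactly what the paper intends: the paper offers no separate argument beyond ``a similar proof to Theorem~\ref{convolucion}'', and your computation --- expanding $\tau_xf$ as in Lemma~\ref{tau_x}, regrouping into $Q_l(x)=\sum_{k\ge l}\binom{k}{l}\varphi\big((P_k)_{x^l}\big)$, and combining weak differentiability with (\ref{norma de phi}) to obtain $p_R(\varphi*f)\le c\,p_{K(R+r)}(f)$ --- is precisely that adaptation, with the Aron--Berner ingredients stripped out.

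The second assertion is where you deviate, and your argument has a genuine gap. With the definition actually given in the statement, $(\varphi*\psi)(f)=\psi(\varphi*f)=\psi(T_\varphi f)$, so $M_\varphi$ is simply the transpose of $T_\varphi$ (the display $\psi\mapsto\psi*\varphi$ is best read as a slip for $\varphi*\psi$); its continuity is then immediate from the first part, both for the weak-star topologies and for the strong dual topologies, since for bounded $B\subset H_{b\u}(E)$ one has $\sup_{f\in B}|\psi(T_\varphi f)|=\sup_{g\in T_\varphi(B)}|\psi(g)|$ and $T_\varphi(B)$ is bounded. You instead read $M_\varphi(\psi)$ as $f\mapsto\varphi(\psi*f)$, i.e.\ $\varphi\circ T_\psi$, which is in general a different functional from $\psi\circ T_\varphi$: the identity $\varphi(\psi*f)=\psi(\varphi*f)$ is a Fubini-type symmetry that fails for general $\varphi,\psi$ (the same phenomenon that makes convolution on the spectrum non-commutative), so you are proving continuity of a possibly different map. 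Moreover, for that map your estimate only shows that $M_\varphi$ carries strongly bounded subsets of $H_{b\u}(E)'$ (which are indeed equicontinuous, $H_{b\u}(E)$ being Fr\'echet, hence barrelled) into equicontinuous sets, i.e.\ that $M_\varphi$ is a bounded operator. The final step ``bounded implies continuous'' on the strong dual of a Fr\'echet space requires that dual to be bornological, equivalently that $H_{b\u}(E)$ be distinguished, which you have not justified; so the last sentence of your second part does not follow as stated. Both issues disappear if you use the transpose reading above, which is the argument the paper's phrasing points to.
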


As usual, we call a continuous operator $T:H_{b\u}(E)\to H_{b\u}(E)$ that commutes with translations  a convolution operator. We have the following characterization:

\begin{corollary}
 With the hypothesis and notation of the previous corollary, $T:H_{b\u}(E)\to H_{b\u}(E)$ is a convolution operator if and only if there exist $\varphi\in H_{b\u}(E)'$ such that $Tf=\varphi *f $ for every  $f\in H_{b\u}(E)$.
\end{corollary}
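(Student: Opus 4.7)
The plan is to verify both implications; each is short once one records the elementary identity $\tau_x\tau_y=\tau_{x+y}$ as operators on $H_{b\u}(E)$, which is immediate from the definition $\tau_x f(z)=f(x+z)$.

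For the sufficiency, assume $Tf=\varphi*f$ for some $\varphi\in H_{b\u}(E)'$. For every $f\in H_{b\u}(E)$ and $x,y\in E$, the definitions and the identity $\tau_{x+y}=\tau_x\tau_y$ give
\begin{equation*}
\tau_y(\varphi*f)(x)=(\varphi*f)(x+y)=\varphi(\tau_{x+y}f)=\varphi(\tau_x(\tau_y f))=\bigl(\varphi*(\tau_y f)\bigr)(x),
\end{equation*}
so $\tau_y\circ T_\varphi=T_\varphi\circ\tau_y$; thus $T_\varphi$ is a convolution operator.

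For the necessity, suppose $T:H_{b\u}(E)\to H_{b\u}(E)$ is continuous and satisfies $\tau_x\circ T=T\circ\tau_x$ for every $x\in E$. Define
\begin{equation*}
\varphi(f):=T(f)(0)=\delta_0\bigl(T(f)\bigr),\qquad f\in H_{b\u}(E).
\end{equation*}
Since the point evaluation $\delta_0$ is a continuous linear functional on $H_{b\u}(E)$ (it belongs in fact to $M_{b\u}(E)$) and $T$ is continuous, we have $\varphi\in H_{b\u}(E)'$. Now for any $f$ and any $x\in E$,
\begin{equation*}
Tf(x)=(\tau_x Tf)(0)=T(\tau_x f)(0)=\varphi(\tau_x f)=(\varphi*f)(x),
\end{equation*}
the second equality being the commutativity hypothesis. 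Hence $Tf=\varphi*f$ for every $f$, i.e.\ $T=T_\varphi$.

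There is no real obstacle here: the whole content is packaged in the previous corollary, which guarantees that $\varphi*f$ lies in $H_{b\u}(E)$ and that $T_\varphi$ is continuous, so that the functional $\varphi=\delta_0\circ T$ produced in the necessity part actually yields a convolution operator of the prescribed form. The only thing to be slightly careful about is to invoke the weak differentiability / multiplicativity hypothesis only through the previous corollary, not directly.
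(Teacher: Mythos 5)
Your proof is correct and follows essentially the same route as the paper: sufficiency via the identity $\tau_{x+y}=\tau_x\circ\tau_y$ together with the previous corollary's continuity of $T_\varphi$, and necessity by taking $\varphi=\delta_0\circ T$ and using the commutation $\tau_x\circ T=T\circ\tau_x$ to get $Tf(x)=T(\tau_x f)(0)=\varphi(\tau_x f)=(\varphi*f)(x)$. Your write-up merely makes explicit the details the paper leaves as ``easily checked.''
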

\begin{proof}
If $\varphi\in H_{b\u}(E)'$ then $T_{\varphi}(f)=\varphi*f$ is  continuous by the previous corollary and it is easily checked to be a convolution operator. Conversely, let $\varphi=T\circ\delta_0$. Then $Tf(x)=\tau_x(Tf)(0)=T(\tau_xf)(0) =\varphi * f$.
\end{proof}

Now we show that, again, our hypotheses are fulfilled by many sequences of polynomial ideals.

\begin{example}\label{ejemplos wd}\rm
The following sequences are weakly differentiable:
\begin{enumerate}
\item[(a)] $\u=\{\p^k\}_k$: if $P\in\p^k(E)$ and
$\varphi\in\p^{k-l}(E)'$ then it is clear that
$x\mapsto\varphi\big(P_{x^l}\big)\in\p^l(E)$ and
$\big\|x\mapsto\varphi\big(P_{x^l}\big)\big\|_{\p^l(E)} \le
e^l\|\varphi\|_{\p^{k-l}(E)'}\|P\|_{\p^k(E)}$.

\item[(b)] $\u=\{\p^k_I\}_k$:  this is a consequence of the above Proposition \ref{u w-diff implica u* multip} ($ii$), since $\p^k_I=(\p^k)^*$ and $\{\p^k\}$ is multiplicative with constant $M=1$.

\item[(c)] $\u=\{\p^k_e\}_k$: if $P\in\p_e^k(E)$ and $\varphi\in\p_e^{k-l}(E)'$ then $Q(x)=\varphi\big(P_{x^l}\big)$ is in $\p^l(E)$. Let $E\overset{J}{\hookrightarrow}G$ and $\tilde P$ an extension of $P$ to $G$. Then $\tilde Q(y)=\varphi\big(\tilde P_{y^l}\circ J\big)$ is an extension of $Q$ to $G$, and thus $Q$ is extendible. Moreover, since $|\tilde Q(y)|\le e^l\|y\|^l\|\varphi\|\|\tilde P\|$, it follows that $\|Q\|_e\le e^l\|\varphi\|\|P\|_e$.

\item[(d)] $\u=\{\p^k_w\}_k$: it is known (see \cite[Proposition 2.6]{Din99}) that if $P\in\p^k_w(E)$
then $d^{k-l}P$ is weakly continuous. Thus
$x\mapsto\varphi\big(P_{x^l}\big)\in\p^l_w(E)$ and has norm $\le
e^l\|\varphi\|_{\p^{k-l}(E)'}\|P\|_{\p^k(E)}$.

\item[(e)] $\u$ the sequence of maximal polynomial ideals associated
to any of the natural sequences. This follows from the multiplicativity of those sequences together with Proposition~\ref{u w-diff implica u* multip} $(ii)$, or  from the previous
examples (a) and (b) and Lemma~\ref{palito w d} below.

\item[(f)] If $\{\u_k\}_k$ is a weakly differentiable sequence and $\mathfrak C$ is a normed operator
ideal, then $\{\u_k\circ \mathfrak C\}_k$ is weakly differentiable. In particular, the minimal hulls of the ideals in a weakly differentiable sequence form also a weakly differentiable sequence.
Indeed, let $P=RT\in\u_k\circ \mathfrak C(E)$, with $T\in\mathfrak C(E,E_1)$ and $R\in\u_k(E_1)$. Take $\varphi\in\u_{k-l}\circ \mathfrak C(E)'$ and define $\psi\in\u_{k-l}(E_1)'$ by $\psi(Q)=\varphi(QT)$. Note that $\|\psi\|_{\u_{k-l}(E_1)'}\le\|\varphi\|_{\u_{k-l}\circ \mathfrak C(E)'}\|T\|^{k-l}_{\mathfrak C(E,E_1)}$.
Clearly,
$$
\varphi(P_{x^l})=\varphi(R_{(Tx)^l}T)=\psi(R_{(Tx)^l}),
$$
thus $x\mapsto\varphi(P_{x^l})=\big[x\mapsto\psi(R_{x^l})\big]\circ T$ belongs to $\u_l\circ \mathfrak C(E)$, because $\{\u_k\}$ is weakly differentiable and $T\in\mathfrak C$. Moreover
\begin{eqnarray*}
\|x\mapsto\varphi(P_{x^l})\|_{\u_l\circ \mathfrak C(E)} &=& \|\big[x\mapsto\psi(R_{x^l})\big]\circ T\|_{\u_l\circ \mathfrak C(E)} \le \|x\mapsto\psi(R_{x^l})\|_{\u_l(E_1)}\|T\|^l_{\mathfrak C(E,E_1)}\\
 &\le& K^k\|\psi\|_{\u_{k-l}(E_1)'}\|R\|_{\u_k(E_1)}\|T\|^l_{\mathfrak C(E,E_1)} \\
 &\le& K^k\|\varphi\|_{\u_{k-l}\circ \mathfrak C(E)'}\|R\|_{\u_k(E_1)}\|T\|^k_{\mathfrak C(E,E_1)}.
\end{eqnarray*}
Since this is true for every factorization of $P=RT$, we conclude that
$$
\|x\mapsto\varphi(P_{x^l})\|_{\u_l\circ \mathfrak C(E)}\le K^k\|\varphi\|_{\u_{k-l}\circ \mathfrak C(E)'}\|P\|_{\u_k\circ \mathfrak C(E)}.
$$
\end{enumerate}
\end{example}

\begin{lemma}\label{palito w d}
Let $\u$ be the sequence of maximal polynomial ideals associated to
a sequence of symmetric tensor norms $\{\alpha_k\}_k$. If $\u$ is
weakly differentiable, then the same is true for the sequences of
maximal polynomial ideals associated to $\{\bs \alpha_k/\}_k$ and to
$\{/\alpha_k\bs\}_k$.
\end{lemma}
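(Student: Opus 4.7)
The plan is to reduce this lemma to the immediately preceding multiplicativity lemma, using the duality between weak differentiability and multiplicativity provided by Proposition~\ref{u w-diff implica u* multip} and the Corollary that follows it.

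First, I would note that, since $\u_k$ is maximal and associated to $\alpha_k$, the adjoint ideal $\u_k^*$ is itself maximal and is precisely the maximal ideal associated to the dual tensor norm $\alpha_k'$ (standard representation theorem for maximal ideals, see \cite{Flo01,FloHun02}). The Corollary following Proposition~\ref{u w-diff implica u* multip} therefore says that the hypothesis that $\{\u_k\}_k$ is weakly differentiable is equivalent to the assertion that the sequence of maximal ideals associated to $\{\alpha_k'\}_k$ is multiplicative.

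Second, I would apply the preceding lemma to the sequence $\{\alpha_k'\}_k$ to obtain that the maximal polynomial ideals associated to $/\alpha_k'\bs$ and to $\bs\alpha_k'/$ form multiplicative sequences.

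Finally, I would invoke the standard duality identities $(\bs\alpha_k/)' = /\alpha_k'\bs$ and $(/\alpha_k\bs)' = \bs\alpha_k'/$. These identities, combined with the same representation theorem used in the first step, show that the maximal ideal associated to $\bs\alpha_k/$ (respectively $/\alpha_k\bs$) has adjoint equal to the maximal ideal associated to $/\alpha_k'\bs$ (respectively $\bs\alpha_k'/$). One more application of the Corollary following Proposition~\ref{u w-diff implica u* multip} then translates the multiplicativity obtained in the second step into weak differentiability of the maximal ideals associated to $\bs\alpha_k/$ and to $/\alpha_k\bs$, completing the argument. The main technical obstacle is the careful bookkeeping needed to identify adjoints with maximal ideals of the dual tensor norm at each step and to ensure that the constants track correctly through the three applications of the equivalence; none of this is deep, but all of it has to be verified against the conventions of the paper.
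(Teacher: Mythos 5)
Your argument is correct, but it follows a genuinely different route from the paper's. The paper proves the lemma directly: for $\{\bs\alpha_k/\}_k$ it uses the representation of the associated maximal ideal as those polynomials extending to $\beta_k$-continuous polynomials on $\ell_\infty(B_{E'})$ (with $\beta_k=\alpha_k'$) and repeats the argument given for extendible polynomials, while for $\{/\alpha_k\bs\}_k$ it lifts $P$ through the metric surjection $\ell_1(B_E)\twoheadrightarrow E$, extends the functional $\varphi$ by Hahn--Banach, and checks the identity $Q\circ q(z)=\psi\big((P\circ q)_{z^l}\big)$. You instead dualize: weak differentiability of the maximal sequence associated to $\{\alpha_k\}_k$ is, by Proposition~\ref{u w-diff implica u* multip} together with $\u_k^{**}=\u_k$ for maximal ideals, equivalent to multiplicativity of the maximal sequence associated to $\{\alpha_k'\}_k$; the lemma on injective and projective hulls (via Proposition~\ref{maximini}) transfers that multiplicativity to $\{/\alpha_k'\bs\}_k$ and $\{\bs\alpha_k'/\}_k$; and the duality identities $(\bs\alpha_k/)'=/\alpha_k'\bs$ and $(/\alpha_k\bs)'=\bs\alpha_k'/$ (the first of which the paper itself uses) let you dualize back. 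All ingredients are available and non-circular, and this is in fact the mechanism the paper hints at in Example~\ref{ejemplos wd}(e) for the natural sequences, so your proof can be seen as promoting that remark to the general lemma. Two comparative points: your route is shorter and more conceptual, but you should say a word about coherence (built into the definitions of multiplicative and weakly differentiable), which transfers to the hull sequences by the coherence analogue noted after Remark~\ref{analogously}; and the paper's hands-on proof yields an extra dividend that your duality argument does not obviously give, namely the observation recorded right after it that only weak differentiability on spaces of the form $\ell_\infty(I)$ and $\ell_1(J)$ is actually used.
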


\begin{proof}
Let us  denote $\beta_k=\alpha_k'$. For $\u$ the sequence of maximal
polynomial ideals associated to $\{\bs \alpha_k/\}_k$ we have that
$P$ belongs to $\u_k(E)$ if and only if
 $P\in
\left(\otimes_{ / \beta_k \bs }^{k,s}E\right)'$, and we can proceed
just as we did with the ideal of extendible polynomials (note that
polynomials in $\u_k(E)$ are those that extends to a
$\beta_k$-continuous polynomial on $\ell_{\infty}(B_{E'})$).

For the sequence of maximal ideals associated to
$\{/\alpha_k\bs\}_k$,  $P$ belongs to $\u_k(E)$ if and only if
$\widetilde{P}=P\circ q_k$ belongs to
$\left(\otimes_{\beta_k}^{k,s}\ell_1(B_E)\right)'=\u_k(\ell_1(B_E))$,
where $q_k$ is the metric projection
$\otimes_{\beta_k}^{k,s}\ell_1(B_E) \twoheadrightarrow
\otimes_{\bs\beta_k/}^{k,s}E$. Also, transposing $q_k$ we obtain a
metric injection $\left(\otimes_{\bs\beta_k/}^{k,s}E\right)'
\hookrightarrow \left(\otimes_{\beta_k}^{k,s}\ell_1(B_E)\right)'$.
If we take $\varphi\in \left(\otimes_{\bs\beta_k/}^{k,s}E\right)'$,
we can choose a Hahn-Banach extension $\psi$ of $\varphi$ on
$\otimes_{\beta_k}^{k,s}\ell_1(B_E)$.

Now, if $Q(x)=\varphi\big(P_{x^l}\big)$, we have $$Q\circ q(z)=
\varphi (P_{q(z)^l}) =\psi \left( (P\circ q)_{z^l}\right),$$ which
belongs to $\left(\otimes_{\beta_k}^{k-l,s}\ell_1(B_E)\right)'$
because $\u$ is weakly differentiable. But this means that $Q$
belongs to $\u_{k-l}(E)$.
\end{proof}

In the previous proof, we only used that $\u$ is weakly
differentiable in spaces of the form $\ell_\infty(I)$ (for $\{\bs
\alpha_k/\}_k$)  and $\ell_1(J)$ (for $\{/\alpha_k\bs\}_k$), where $I$
and $J$ are some index sets.

\bigskip
Now we are able to show that the extension of a function in $H_{b\u}(E)$ to the spectrum ``is $\u$-holomorphic on each connected component'':

\begin{theorem}\label{extension al espectro}
Let $E$ be a Banach space and $\u$ be an $AB$-closed multiplicative
sequence which is regular at $E$ and weakly differentiable. Then,
given any function $f\in H_{b\u}(E)$ and its extension $\tilde f$ to
$M_{b\u}(E)$, the restriction of $\tilde f$ to each connected component of $M_{b\u}(E)$ is a $\u$-holomorphic function of bounded type.
\end{theorem}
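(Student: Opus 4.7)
The plan is to reduce the statement to Theorem \ref{convolucion} by means of the parametrization of the connected components obtained in Proposition \ref{dominiodeRiemann}. Since each connected component is in natural bijection with $E''$ via the projection $\pi$, proving that the restriction of $\tilde f$ is $\u$-holomorphic of bounded type amounts to showing that its pullback through this bijection lies in $H_{b\u}(E'')$, and this pullback will turn out to be exactly the function $\tilde T_\varphi(f)$.

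First, I would fix $\varphi\in M_{b\u}(E)$ and invoke Proposition \ref{dominiodeRiemann} to describe its connected component as
\[
S(\varphi)=\{\varphi\circ\tilde\tau_z:\, z\in E''\},
\]
with $\pi|_{S(\varphi)}:S(\varphi)\to E''$ a homeomorphism. Define $\Phi:E''\to S(\varphi)$ by $\Phi(z)=\varphi\circ\tilde\tau_z$. By Lemma \ref{tau z tau w=tau z+w}, $\Phi$ is a bijection, and since $\pi(\Phi(z))=\pi(\varphi)+z$, the map $\Phi$ differs from the inverse of $\pi|_{S(\varphi)}$ only by the translation $z\mapsto z+\pi(\varphi)$ on $E''$. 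Translations being analytic automorphisms of $E''$, the property of $\tilde f|_{S(\varphi)}$ being $\u$-holomorphic of bounded type (in the sense inherited from the Riemann domain structure) is equivalent to $\tilde f\circ\Phi$ belonging to $H_{b\u}(E'')$.

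Next, I would compute the pullback of $\tilde f$ along $\Phi$: for every $z\in E''$,
\[
\tilde f(\Phi(z))=\Phi(z)(f)=(\varphi\circ\tilde\tau_z)(f)=\tilde T_\varphi(f)(z).
\]
Since $\varphi$ is a continuous linear functional on $H_{b\u}(E)$ and $\u$ is $AB$-closed, weakly differentiable and coherent (the latter because it is multiplicative), Theorem \ref{convolucion} applies and yields $\tilde T_\varphi(f)\in H_{b\u}(E'')$. Combined with the identification of the previous paragraph, this says precisely that $\tilde f|_{S(\varphi)}$ is a $\u$-holomorphic function of bounded type on the connected component $S(\varphi)$, and since $\varphi$ was arbitrary the theorem follows.

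The main obstacle is conceptual rather than technical: one must set up the correct identification between the connected component and $E''$, and verify that the notion of being in $H_{b\u}$ on this component (viewed as a Riemann domain analytically copy of $E''$) really translates into a statement about the pullback function $\tilde f\circ\Phi$ on $E''$. Once this identification is in place, all the analytic content is carried by Theorem \ref{convolucion}, and the proof reduces to unwinding definitions.
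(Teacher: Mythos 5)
Your proof is correct and follows essentially the same route as the paper: both reduce the statement, via the chart $\pi|_{S(\varphi)}$ of the connected component, to the fact that $z\mapsto \varphi\circ\tilde\tau_z(f)$ lies in $H_{b\u}(E'')$, which is Theorem \ref{convolucion}. The only cosmetic difference is that the paper absorbs the translation by $-\pi(\varphi)$ into the functional, writing the pullback as $\widetilde T_{\varphi\circ\tilde\tau_{-\pi(\varphi)}}(f)$ (using Corollary \ref{phi tau z} and Lemma \ref{tau z tau w=tau z+w}), whereas you compose $\tilde T_\varphi(f)$ with a translation of $E''$; for full rigor this last step should be justified by Lemma \ref{tau_x} applied to the Banach space $E''$ rather than by the mere fact that translations are analytic automorphisms.
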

\begin{proof}
We have to show that for every $\varphi\in M_{b\u}(E)$, $\tilde
f\circ \big(\pi|_{S(\varphi)})^{-1}\in H_{b\u}(E'')$. But note that
$S(\varphi)=\{\varphi\circ\tilde\tau_z:\,z\in E''\}$ and that
$\big(\pi|_{S(\varphi)})^{-1}(z)=\varphi\circ\tilde\tau_{z-\pi(\varphi)}$
so $\tilde f\circ \big(\pi|_{S(\varphi)})^{-1}(z)
=\varphi\circ\tilde\tau_{z-\pi(\varphi)}(f)$. That is, $\tilde
f\circ
\big(\pi|_{S(\varphi)})^{-1}=\widetilde{T}_{\varphi\circ\tilde\tau_{-\pi(\varphi)}}(f)$
which is in $H_{b\u}(E'')$ by Theorem \ref{convolucion}.
\end{proof}

We can apply the last result in the following cases:
\begin{example}\rm
$(M_{b\u}(E),\pi)$ is a Riemann domain over $E''$ and every function
in $H_{b\u}(E)$ extends to an $\u$-holomorphic function of bounded
type on each connected component of $M_{b\u}(E)$  in the following
cases:
\begin{enumerate}
\item[(a)]  $\u=\{\p^k\}_k$, and $E$ is symmetrically regular (this is \cite[Proposition 6.30]{Din99}).

\item[(b)] $\u=\{\p^k_I\}_k$,  for every Banach space $E$.

\item[(c)]  $\u=\{\p^k_e\}_k$,  for every Banach space $E$.

\item[(d)] $\u=\{\p^k_w\}_k$, for every Banach space $E$.

\item[(e)] $\u$ a sequence of maximal polynomial ideals associated to
any of the natural sequences but
$\{\varepsilon_k\}_k$, for every Banach space $E$.

\end{enumerate}
\end{example}

\section{A Banach-Stone type result}

Now we apply some of our results to obtain a Banach-Stone type
theorem for algebras associated to multiplicative sequences of
polynomial ideals. We follow a procedure as in similar results in
\cite{CarGarMae05}. First, we have:

\begin{lemma}\label{la g es holo}
 Let $\u$ and $\mathfrak B$ be multiplicative sequences.
 Suppose that $\phi:H_{b\u}(E)\to H_{b\mathfrak B}(F)$ is a continuous multiplicative operator and define
 $g:F'' \to E''$ by $g(z)=\pi(\widetilde{\delta}_{z}\circ \phi)$. Then, $g$ is
 holomorphic and for every $\gamma\in E'$, $AB(\gamma)\circ g=AB(\phi \gamma)$. In particular, if the
 finite type polynomials are dense on $\u_k(E)$ (for every $k$), then $AB(\phi f)=AB(f)\circ g$ for every $f\in H_{b\u}(E)$.
\end{lemma}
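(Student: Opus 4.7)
The plan is to unfold the definitions, deduce the identity $AB(\gamma)\circ g = AB(\phi\gamma)$ by a direct computation on $E'$, establish holomorphy of $g$ via the standard weak-holomorphy plus local-boundedness criterion, and finally extend the identity from linear functionals to finite type polynomials (via multiplicativity of $\phi$) and then to $H_{b\u}(E)$ (by density and continuity).

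First I would unpack $g$. For $z\in F''$ the homomorphism $\widetilde\delta_z:H_{b\mathfrak B}(F)\to\mathbb C$ is the Aron--Berner evaluation $h\mapsto AB(h)(z)$, so the multiplicativity of $\phi$ forces $\widetilde\delta_z\circ\phi\in M_{b\u}(E)$, and the element $g(z)=\pi(\widetilde\delta_z\circ\phi)\in E''$ is characterized on $E'$ by
\[
 g(z)(\gamma)\;=\;\widetilde\delta_z(\phi\gamma)\;=\;AB(\phi\gamma)(z), \qquad \gamma\in E'.
\]
Since $AB(\gamma)$ acts on $E''$ by evaluation at $\gamma$, this immediately gives
\[
 AB(\gamma)(g(z))\;=\;g(z)(\gamma)\;=\;AB(\phi\gamma)(z),
\]
which is the stated identity $AB(\gamma)\circ g=AB(\phi\gamma)$.

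Next I would prove that $g$ is holomorphic. For each $\gamma\in E'$ the function $\phi\gamma$ belongs to $H_{b\mathfrak B}(F)\subset H_b(F)$, and hence $AB(\phi\gamma)\in H_b(F'')$ is holomorphic; by the identity just established, every scalar composition $AB(\gamma)\circ g$ is holomorphic on $F''$. Since $E'$ separates the points of $E''$, $g$ is weakly, and therefore G\^ateaux, holomorphic into $E''$. To promote this to holomorphy I would use the continuity of $\phi$: given $R>0$, pick $r,c>0$ with $p_R^{\mathfrak B}(\phi f)\le c\,p_r^{\u}(f)$ for every $f\in H_{b\u}(E)$. The polynomial ideal axioms give $\|\gamma\|_{\u_1(E)}\le\|\gamma\|$ for every $\gamma\in E'$, so for $\|\gamma\|\le 1$ and $\|z\|\le R$, using that Aron--Berner preserves the $H_b$-norms (Davie--Gamelin) and that $\|\cdot\|_{\mathcal P^k}\le\|\cdot\|_{\mathfrak B_k}$,
\[
 |g(z)(\gamma)|\;=\;|AB(\phi\gamma)(z)|\;\le\;p_R^{\mathfrak B}(\phi\gamma)\;\le\;c\,p_r^{\u}(\gamma)\;=\;c\,r\,\|\gamma\|_{\u_1(E)}\;\le\;c\,r.
\]
Therefore $\sup_{\|z\|\le R}\|g(z)\|_{E''}\le cr$, so $g$ is locally bounded; combined with G\^ateaux holomorphy, this makes $g$ holomorphic.

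For the last assertion I would first handle finite type polynomials: if $P=\sum_j \gamma_j^k$ with $\gamma_j\in E'$, then multiplicativity of $\phi$ gives $\phi P=\sum_j (\phi\gamma_j)^k$, and applying the already proved identity term by term yields
\[
 AB(\phi P)(z)\;=\;\sum_j AB(\phi\gamma_j)(z)^k\;=\;\sum_j g(z)(\gamma_j)^k\;=\;AB(P)(g(z)).
\]
Under the density hypothesis, every $P\in\u_k(E)$ is a $\u_k$-limit of finite type polynomials $P_n$; since $\|\cdot\|_{\mathcal P^k}\le\|\cdot\|_{\u_k}$, we have $P_n\to P$ also in $\mathcal P^k(E)$, so $AB(P_n)\to AB(P)$ pointwise on $E''$, while the continuity of $\phi$ combined with the continuity of $AB:H_b(F)\to H_b(F'')$ gives $AB(\phi P_n)\to AB(\phi P)$ pointwise on $F''$. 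This extends the identity to every $P\in\u_k(E)$, and finally to every $f=\sum_k P_k\in H_{b\u}(E)$ by absolute convergence of the Taylor series in the seminorms $p_R^{\u}$ (and the analogous absolute convergence of $AB(\phi f)$ and $AB(f)\circ g$ on bounded subsets of $F''$, which follows from the local bound on $\|g\|$ obtained above).

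The main obstacle I anticipate is the local boundedness step, since it is the place where the continuity of $\phi$ must be combined with the structural estimate $\|\gamma\|_{\u_1(E)}\le\|\gamma\|$ coming from the polynomial ideal axioms; the rest of the argument is essentially a formal manipulation plus a density/continuity bookkeeping.
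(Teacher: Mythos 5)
Your proof is correct and takes essentially the same route as the paper: the key identity $g(z)(\gamma)=\widetilde{\delta}_z(\phi\gamma)=AB(\phi\gamma)(z)$, holomorphy of $g$ via weak*-holomorphy (where the paper simply cites Mujica's Example 8D, you instead supply the local-boundedness estimate $\|g(z)\|\le cr$ from the continuity of $\phi$), and the final assertion from multiplicativity, continuity and density. The additional details you give (the term-by-term computation for finite type polynomials and the density/continuity bookkeeping) are correct elaborations of steps the paper dispatches with ``since $\phi$ is multiplicative and continuous, the last assertion follows.''
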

\begin{proof}
Denote by $\theta_\phi:M_{b\mathfrak B}(F)\to M_{b\u}(E)$ the
restriction of the transpose of $\phi$. Then $g$ is just the
composition $F'' \overset{\widetilde{\delta}}{\longrightarrow}
M_{b\mathfrak B}(F) \overset{\theta_\phi}{\longrightarrow}
M_{b\u}(E) \overset{\pi}{\longrightarrow} E''$. If we take $z\in
F''$ and $\gamma\in E'$, then
$g(z)(\gamma)=\widetilde{\delta}_{z}(\phi \gamma)=AB(\phi
\gamma)(z)$. Thus $g$ is weak*-holomorphic on $F''$ and therefore
holomorphic (see for example \cite[Example 8D]{Muj86}).

If $\gamma\in E'$ then $AB(\gamma)(g(z))=g(z)(\gamma)= AB(\phi
\gamma)(z)$. Since $\phi$ multiplicative and continuous, the last
assertion follows.
\end{proof}

Although it is hard for a Banach space $E$ to satisfy that finite type polynomials are dense in $H_b(E)$ ($c_0$ and Tsirelson like spaces do, but no other classical Banach spaces), it is not so uncommon that finite type polynomials be dense in $H_{b\u}(E)$ for certain sequences $\u$ and Banach spaces $E$. Besides those sequences where finite type polynomials are automatically dense (such as approximable or nuclear polynomials), there are combination of ideals and Banach spaces that make finite type polynomials dense (see Example~\ref{ejemplo-Banach-Stone} below).

\begin{theorem}\label{Banach-Stone}
(a) Let $\u$ be an $AB$-closed multiplicative sequence such that
finite type polynomials are dense in $\u_k(E'')$. Then, $H_{b\u}(E)$
and $H_{b\u}(F)$ are topologically isomorphic algebras if and only if $E'$ and $F'$ are isomorphic.

(b) Let $\u$ and $\mathfrak B$ be multiplicative sequences,
$\mathfrak B$ also $AB$-closed. Suppose that finite type polynomials
are dense in $\u_k(E)$ and on $\mathfrak B_k(E'')$ for some Banach
space $E$, for all $k$. If $H_{b\u}(E)$ and $H_{b\mathfrak B}(F)$
are topologically isomorphic algebras, then $E'$ is isomorphic to
$F'$.
\end{theorem}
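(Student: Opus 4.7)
The plan is to handle the straightforward converse of (a) first, then tackle the forward direction of both (a) and (b) by promoting the algebra isomorphism to a holomorphic bijection of the biduals and extracting the isomorphism $E'\cong F'$ from its derivative.

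For the converse in (a), given an isomorphism $T\colon E'\to F'$, its adjoint $T'\colon F''\to E''$ is a $w^*$-$w^*$ continuous linear isomorphism, so I would set $\Phi(f)(y):=AB(f)(T'(J_F y))$. Each Taylor coefficient of $\Phi(f)$ equals $AB\bigl(\tfrac{d^kf(0)}{k!}\bigr)\circ(T'\circ J_F)$; by $AB$-closedness it lies in $\u_k(E'')$, and composition with the bounded linear map $T'\circ J_F\colon F\to E''$ keeps it in $\u_k(F)$ by the ideal property, giving continuity via an estimate of the form $p_R(\Phi f)\le p_{\alpha\|T\|R}(f)$. Multiplicativity is automatic because the Aron--Berner extension is an algebra homomorphism, and the analogous construction with $T^{-1}$ supplies a two-sided inverse.

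For the forward direction, let $\phi\colon \hbu(E)\to \hbv(F)$ denote the algebra isomorphism and $\psi$ its continuous inverse. I apply Lemma~\ref{la g es holo} to $\phi$: the density of finite type polynomials in $\u_k(E)$ (which in~(a) is inherited from the assumed density in $\u_k(E'')$, since the Aron--Berner map is an isometric embedding $\u_k(E)\hookrightarrow \u_k(E'')$ carrying finite type polynomials to finite type polynomials) delivers a holomorphic $g\colon F''\to E''$ with $AB(\phi f)=AB(f)\circ g$ for every $f\in \hbu(E)$. Applying the same lemma to $\psi$ I obtain a holomorphic $h\colon E''\to F''$ with $AB(\psi\tilde f)=AB(\tilde f)\circ h$ for every $\tilde f\in \hbv(F)$, provided the companion density hypothesis for $\mathfrak B$ can be arranged. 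For $\mu\in F'$ and $z\in F''$, combining the two formulas with $\phi\psi=\mathrm{id}$ gives
\[
AB(\mu)(h(g(z)))=AB(\psi\mu)(g(z))=AB(\phi(\psi\mu))(z)=AB(\mu)(z),
\]
so $h\circ g=\mathrm{id}_{F''}$ since $F'$ separates $F''$; the mirror computation with $\phi\gamma$, $\gamma\in E'$, yields $g\circ h=\mathrm{id}_{E''}$.

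Both $g$ and $h$ are weak-$*$ holomorphic (as is transparent from the proof of Lemma~\ref{la g es holo}), so their derivatives at any point are $w^*$-$w^*$ continuous linear operators between biduals, hence pretransposes of bounded operators between duals. Applying the chain rule to $h\circ g=\mathrm{id}_{F''}$ at $z=0$ and to $g\circ h=\mathrm{id}_{E''}$ at $w=g(0)$ shows that $dg(0)\colon F''\to E''$ and $dh(g(0))\colon E''\to F''$ are mutually inverse isomorphisms; their pretransposes are inverse isomorphisms $T\colon E'\to F'$ and $S\colon F'\to E'$. The step I expect to be the main obstacle is justifying that the density hypotheses (particularly the condition in~(b) stated in terms of $\mathfrak B_k(E'')$ rather than $\mathfrak B_k(F)$) indeed supply the Lemma~\ref{la g es holo} formula for $\psi$; bridging this should exploit the algebra isomorphism $\phi$ to transfer density information across $F$ and $E''$ together with the $AB$-closed isometric embedding $\mathfrak B_k(F)\hookrightarrow \mathfrak B_k(F'')$. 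The minor technicality that $g(0)$ is generally nonzero is absorbed by invoking the chain rule at the matched pair $(0,g(0))$ rather than translating $g$ beforehand.
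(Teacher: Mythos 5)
Your architecture (apply Lemma~\ref{la g es holo} to $\phi$ and $\phi^{-1}$, show $g$ and $h$ are mutually inverse, differentiate, read off an adjoint isomorphism) is the paper's, and your derivation of $h\circ g=\mathrm{id}_{F''}$ is sound: it only uses the strong formula $AB(\phi f)=AB(f)\circ g$ (available from the density in $\u_k(E)$) together with $AB(\mu)\circ h=AB(\phi^{-1}\mu)$ for $\mu\in F'$, which is the part of the lemma that needs no density. The genuine gap is that you never prove $g\circ h=\mathrm{id}_{E''}$: your mirror computation requires the strong formula for $\psi=\phi^{-1}$, i.e.\ density of finite type polynomials in $\mathfrak B_k(F)$, which is not a hypothesis, and you explicitly leave this ``bridge'' open. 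The paper's bridge is concrete and different from what you gesture at: differentiate $h\circ g=\mathrm{id}_{F''}$ to get $dh(g(0))\circ dg(0)=\mathrm{id}_{F''}$, so $dg(0)$ realizes $F''$ as a complemented subspace of $E''$; composing with the corresponding projection transfers the density of finite type polynomials from $\mathfrak B_k(E'')$ to $\mathfrak B_k(F'')$, and $AB$-closedness of $\mathfrak B$ then pushes it down to $\mathfrak B_k(F)$ (approximate $AB(P)$ in $\mathfrak B_k(F'')$ and restrict to $F$). Only after this does one know $M_{b\mathfrak B}(F)=\widetilde{\delta}(F'')$ (equivalently, the strong formula for $\psi$), whence $g\circ h=\mathrm{id}_{E''}$. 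Your suggested fix via the embedding $\mathfrak B_k(F)\hookrightarrow\mathfrak B_k(F'')$ and the isomorphism $\phi$ does not supply this: the embedding goes in the unhelpful direction, and what links $\mathfrak B_k(E'')$ to $\mathfrak B_k(F'')$ is precisely the complemented copy of $F''$ inside $E''$ produced by the derivative, which you have not exploited. Without this step $dg(0)$ is only left invertible and no isomorphism of the biduals is obtained.

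Two further justifications are faulty. First, it is not true that weak-star holomorphy of $g,h$ forces their differentials at \emph{every} point to be $w^*$-$w^*$-continuous: $dh(w_0)(u)(\mu)=d\bigl(AB(\phi^{-1}\mu)\bigr)(w_0)(u)$, and the differential of an Aron--Berner extension at a generic point of the bidual is an element of the third dual, not of the predual; this matters exactly at $w_0=g(0)\in E''$, which need not lie in $J_E(E)$. The paper gets the $w^*$-$w^*$-continuity of the differentials from the fact (again a consequence of the bridge) that every polynomial in $\mathfrak B_k(F)$ (and in $\u_k(E)$) is approximable, so that $AB(\phi\gamma)$ is $w^*$-continuous on bounded sets, and then applies \cite[Lemma 2.1]{AroColGam95}. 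Second, in the converse of (a), the claim that the analogous construction with $T^{-1}$ is a two-sided inverse of $\Phi$ is not automatic: one needs the Aron--Berner extension of $AB(f)\circ T'\circ J_F$ to agree with $AB(f)\circ T'$ on $F''$, which fails in general without regularity; it is precisely here that the paper uses the density hypothesis in $\u_k(E'')$ (hence in $\u_k(F'')$), which yields regularity at $E$ and $F$, and then follows \cite{CarLas04,LasZal00}.
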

\begin{proof}
$(a)$ $(b)$ If $E'$ and $F'$ are isomorphic, so are $E''$ and $F''$. Therefore, finite type polynomials are also dense in $\mathfrak A_k(F'')$. Thus, $\u$ is regular both at $E$ and $F$ and
we can follow the reasoning in \cite{CarLas04,LasZal00} to obtain the desired isomorphism. The converse is a particular case of $(b)$.

$(b)$ Suppose that $\phi:H_{b\u}(E)\to H_{b\mathfrak B}(F)$ is an isomorphism. Let $g:F'' \to E''$ and $h:E''\to F''$ be the applications given by Lemma \ref{la g es holo} for $\phi$ and $\phi^{-1}$ respectively. Then $h\circ g$ is the composition
$$
  F''\overset{\widetilde{\delta}}{\to} M_{b\mathfrak B}(F)
  \overset{\theta_\phi}{\to} M_{b\u}(E) \overset{\pi}{\to}  E'' \overset{\widetilde{\delta}}{\to} M_{b\u}(E) \overset{\theta_{\phi^{-1}}}{\to} M_{b\mathfrak B}(F)\overset{\pi}{\to} F''.
$$
Since $M_{b\u}(E)=\widetilde{\delta}(E'')$, it follows that $h\circ
g=id_{F''}$. Thus $dh(g(0))\circ dg(0)=id_{F''}$ and therefore $F''$
is isomorphic to a complemented subspace of $E''$ which implies that
every polynomial in $\mathfrak B_k(F'')$ is approximable. Since
$\mathfrak B$ is $AB$-closed we can conclude that every polynomial
in $\mathfrak B_k(F)$ is approximable (if $P\in\mathfrak B_k(F)$
then $AB(P)\in\mathfrak B_k(F'')$, thus $AB(P)$ is approximable and
therefore $P$ is approximable). Now, since $M_{b\mathfrak
B}(F)=\widetilde{\delta}(F'')$, we can prove similarly that $g\circ
h=id_{E''}$, that is, $h=g^{-1}$, and differentiating at $g(0)$ we
obtain that $E''$ is isomorphic to $F''$.

Since every polynomial on $\mathfrak B_k(F)$ is approximable we have
that $\phi \gamma$ is weakly continuous on bounded sets for every $\gamma\in E'$ and then
$AB(\phi \gamma)$ is $w^*$-continuous on bounded sets. The identity
$g(z)(\gamma)=AB(\phi\gamma)(z)$ shown in Lemma~\ref{la g es holo}
assures then that $g$ is $w^*$-$w^*$-continuous on bounded sets.
Similarly, $g^{-1}$ is $w^*$-$w^*$-continuous on bounded sets.
Moreover, applying \cite[Lemma 2.1]{AroColGam95} to $z\mapsto
g(z)(\gamma)$, we obtain that de differential of $g$ at any point is
$w^*$-$w^*$-continuous (and analogously for $g^{-1}$). Therefore,
the isomorphism between $E''$ and $F''$ is the transpose of an
isomorphism between $F'$ and $E'$.
\end{proof}

\begin{example}\label{ejemplo-Banach-Stone}\rm
Finite type polynomials are dense in the space of integral polynomial on any Asplund Banach space, since in this case, integral and nuclear polynomials coincide (see \cite{Ale85,BoyRya01,CarDim00}). Moreover, finite type polynomials are also dense in the space of extendible polynomials on an Asplund space \cite{CarGal-2}.
Then, as a consequence of Theorem \ref{Banach-Stone} we have:  suppose that $E$ and $F$ are Banach spaces, one of them reflexive, and let $\u$ and $\mathfrak B$ can be any of the sequence of nuclear, integral, approximable or extendible polynomials. If $H_{b\u}(E)$ is isomorphic to
 $H_{b\mathfrak B}(F)$, then $F$ is isomorphic to $E$.
\end{example}

\end{document}